\newcommand{\calL}{\mathcal{L}}
\newcommand{\calO}{\mathcal{O}}
\newcommand{\calT}{\mathcal{T}}
\newcommand{\calA}{\mathcal{A}}
\newcommand{\calB}{\mathcal{B}}
\newcommand{\bbP}{\mathbb{P}}
\newcommand{\bbZ}{\mathbb{Z}}
\newcommand{\bbG}{\mathbb{G}}
\newcommand{\bbT}{\mathbb{T}}
\newcommand{\bbC}{\mathbb{C}}
\newcommand{\bbF}{\mathbb{F}}
\newcommand{\frakC}{\mathfrak{C}}
\newcommand{\Foc}{\mathrm{Foc}}
\newcommand{\frakS}{\mathfrak{S}}
\newtheorem{thm}{Theorem}[section]
 \newtheorem{prop}[thm]{Proposition}
 \newtheorem{lem}[thm]{Lemma}
 \theoremstyle{definition}
 \newtheorem{example}[thm]{Example}
 \newtheorem{remark}[thm]{Remark}
\newcommand{\Kum}{\mathrm{Kum}}
\newcommand{\Jac}{\mathrm{Jac}}
\newcommand{\la}{\langle}
\newcommand{\ra}{\rangle}
\newcommand{\beq}{\begin{equation}}
\newcommand{\eeq}{\end{equation}}
\author{Igor V. Dolgachev}
\address{\hfill \newline 
Department of Mathematics \newline
University of Michigan \newline
525 East University Avenue \newline
Ann Arbor,
 MI 48109-1109 USA}
\email{idolga@umich.edu}
\begin{document}
\title[K3 surfaces of Kummer type]
{K3 surfaces of Kummer type in characteristic two}

\date{}
\dedicatory{\hspace{5cm}
We will go the other way.\\
\vspace{3pt}\hspace{7cm} Vladimir Illych Lenin}

\begin{abstract} We discuss K3 surfaces in characteristic two 
that contain the Kummer configuration of smooth rational curves.
\end{abstract}

\maketitle

\section*{Introduction} 
The Kummer surface $\Kum(A)$ of an abelian surface $A$ over an algebraically closed field of characteristic $p$ 
is defined to be the quotient of $A$ by the negation 
involution $\iota:a\mapsto -a$. If $p\ne 2$, the abelian surface $A$ has $2^4$ two-torsion points that give 
rise to sixteen ordinary double points 
on $\Kum(A)$. A minimal resolution of singularities $X$ of $\Kum(A)$ is a K3 surface containing a set $\calA$ of 16 disjoint 
smooth rational curves on it ($(-2)$-curves for short because their self-intersection is equal to $-2$). Conversely, if 
$k = \bbC$, the field of complex numbers, a theorem of Nikulin asserts that a K3 surface containing 
a set of 16 disjoint $(-2)$-curves arises in this way from
the Kummer surface of some complex abelian surface.\footnote{This fact usually assumes that the ground field is the field 
of complex numbers, but, as shown to me by the anonymous referee, it is true if $p\ne 2$.}

Let $A$ be a simple principally polarized abelian surface, 
hence isomorphic to the Jacobian variety $\Jac(C)$ of a curve of genus 
$2$. The embedding of $C$ into $\Jac(C)$ can be chosen in such a way that its image $\Theta$ is invariant under the involution 
$\iota$. The linear system $|2\Theta|$ defines 
a regular map $\phi:A\to |2\Theta|^* \cong \bbP^3$ that factors through $\Kum(A)$. This map embeds $\Kum(A)$ into $\bbP^3$ as a quartic 
surface with 16 ordinary double points. These surfaces have been studied for two hundred years; we refer 
to \cite{DolgachevKum} for the history.
The restriction of the map $\phi$ to a translate of $\Theta$ by a $2$-torsion point is a degree two map to a conic in 
$\Kum(A)$ ramified over 6 points. The set $\calB$ of proper transforms of the sixteen conics in $X$ 
also consists of disjoint $(-2)$-curves. The incidence relation between the two sets defines an abstract symmetric configuration 
$(16_6)$, the \emph{Kummer configuration}. 

If $A$ is a non-simple abelian surface, i.e. $A$ is isomorphic to the product 
$E_1\times E_2$ of elliptic curves, the symmetric principal polarization $\Theta$ can be chosen to be 
equal to $(E_1\times \{0\})\cup (\{0\}\times E_2)$. The map $\phi:A\to \bbP^3$ defined by $|2\Theta|$ is of degree 
$4$ onto a smooth quadric $Q$. The union of the images of the translates of $\Theta$ is the union of 8 lines 
$L_i,M_i$ on $Q$, 
four from each of the two
rulings. The double cover $X'$ of $Q$ branched along these eight lines has $16$ ordinary points, and it is birationally 
isomorphic to a K3 surface $X$. The surface contains a set $\calA$ of disjoint sixteen $(-2)$-curves equal to the exceptional curves 
$E_{ij},1\le i,j\le 4,$  of a minimal resolution 
of singularities of $X'$. Another set of sixteen disjoint $(-2)$-curves consists of reducible $(-2)$-curves 
$\bar{L}_i+E_{ij}+\bar{M}_{j}$, where $\bar{L}_i,\bar{M_j}$ are reduced pre-images of the lines $L_i, M_j$. The two sets $(\calA,\calB)$ form the Kummer 
configuration $(16_6)$.

A beautiful aspect of the geometry of the Kummer surfaces of Jacobians of curves of genus 2 
is their relationship with the classical geometry of quadratic line complexes \cite[10.3]{CAG}. 
A Kummer surface appears as the \emph{singular surface} 
of a quadratic line complex $\frakC$, the locus of points $x\in \bbP^3$ such that the 
plane $\Omega(x)$ of lines containing $x$ intersects $\frakC$ along a singular conic. The set of irreducible components 
of these conics (which are lines in $\frakC$) is isomorphic to the Jacobian variety of a curve $C$ of genus 2. 
The curve $C$ is isomorphic to the double cover 
of the pencil of quadrics containing $\frakC$ ramified over the set of six singular quadrics. The set of singular points of 
$\Omega(x)\cap \frakC, x\in \bbP^3,$ is an octic surface in the Pl\"ucker space $\bbP^5$ birationally isomorphic 
to the Kummer surface. It is singular in characteristic $2$  \cite{KatsuraKondo}.
 
A less-known construction, due to Kummer himself, relates the Kummer surface $\Kum(\Jac(C))$ to the theory
of congruences of lines in $\bbP^3$, irreducible surfaces in the Grassmannian $G_1(\bbP^3)$. The Kummer surface appears as the 
focal surface 
of a smooth congruence of lines $S$ of order 2 and class 2. The congruence $S$ is a quartic del 
Pezzo surface anti-canonically embedded in a hyperplane in the Pl\"ucker space $\bbP^5$. Its realization 
as a congruence of lines chooses a smooth anti-bicanonical curve $B\in |-2K_S|$ that touches all 16 lines on 
$S$. The double cover $X$ of $S$ branched along $B$ is a K3 surface birationally isomorphic to the Kummer surface 
$\Kum(\Jac(C))$ for some genus $2$ curve $C$. The $16$ lines on $S$ split into the union of two sets $\calA, \calB$ 
of disjoint $(-2)$-curves, 
which form the Kummer configuration. 

Let us see what is going wrong if we assume that $p = 2$. First of all, there are no normal quartic surfaces with 
16 nodes \cite{Catanese}. 
An abelian surface $A$ has four, two, or one $2$-torsion points 
depending on its $p$-rank $r$ equal to $2,1,0$, respectively. If $r = 2$ (resp. $r = 1$, resp. $r = 0$), the singular points of 
$\Kum(A):=A/(\iota)$ are four rational double points 
of type $D_4$ (resp. two rational double point of type $D_8$, resp. one elliptic double point) \cite{Katsura}. In the fist two cases, the Kummer surface is birationally isomorphic to a K3 surface, in the third case, it is a rational surface.
The linear system $|2\Theta|$ still defines a degree two map onto a quartic surface in $\bbP^3$. The equations 
of these surfaces can be found in \cite{Laszlo} if $r = 2$ and in \cite{Duquesne} for arbitrary $2$-rank. 

The relationship with the quadratic line complexes is studied in a recent paper of 
T. Katsura and S. Kond\={o} \cite{KatsuraKondo}. In characteristic 
$2$, a pencil of quadrics in $\bbP^5$ with smooth base locus $Y$ has three (instead of six) singular 
quadrics. 
The variety of lines in $Y$ is isomorphic to the Jacobian variety of a genus 2 curve with an Artin-Schreier cover 
of $\bbP^1$ of the form $y^2+a_3(t_0,t_1)y+a_6(t_0,t_1) = 0$, where the zeros of the binary cubic $a_3$ correspond to singular quadrics in the pencil
\cite{Bhosle}. Identifying one of the smooth members of the pencil with the 
Grassmannian $G_1(\bbP^3)$, one can consider, as in the 
case $p\ne 2$, the base locus of the pencil as a quadratic line complex $\frakC$.
The singular surface of the quadratic line complex is a Kummer 
quartic surface and the surface
in $\bbP^5$ of singular points of the conics $\Omega(x)\cap Y$ is an octic surface with 12 nodes birationally isomorphic 
to the Kummer surface. The equations of the quartic and the octic surfaces are provided in loc. cit..

The main drawback of this nice extension of the theory of Kummer surfaces to characteristic 2 is that 
the Kummer configuration and the relationship 
between 6 points in $\bbP^1$ gets lost. In the present paper, we will present another approach whose goal 
is to reconstruct 
these relationships. Although we lose the relationship to curves of genus two, 
we will restore the relationships with the
Kummer configuration $(16_6)$, sets of six points in $\bbP^1$, and 
the theory of congruences of lines in $\bbP^3$. The situation is very similar to what happens with del Pezzo surfaces of degree two (resp one). 
The  Geiser (resp. Bertini) involution  defines a separable Artin-Schreier double cover whose branch curve is a smooth 
conic
(resp. a rational quartic curve) instead of a plane quartic curve (resp. a canonical genus 4 curve on a singular quadric). The 
connection to these curves is lost, but their attributes such as 28 bitangents (resp. 120 tritangent planes) survive (see \cite{DM}).

The paper should be considered as a footnote to \cite{KatsuraKondo}. I am thankful to the authors for a helpful 
discussion. I am also 
grateful to the anonymous referee for many helpful comments and for detecting some computational errors.

\section{K3 surfaces of Kummer type} Let $k$ be an algebraically closed field of characteristic $p \ge 0$. 
We define a \emph{K3 surface of Kummer type} to be a K3 surface $X$ that contains two sets $\calA$ and $\calB$ of sixteen
 disjoint $(-2)$-curves (or their degenerations), such that any $A\in A$ intersects $n$ curves from $\calB$, and vice versa, every 
curve $B$ from $\calB$ intersects $n$ curves $A$ from $\calA$. In other words, the two sets $(\calA,\calB)$ form a 
symmetric abstract configuration 
$(16_n)$. We call the number $n$ the \emph{index} of $X$.

A classical example of a K3 surface of Kummer type of index $6$ is a minimal smooth model of the Kummer surface $\Kum(A)$ of a 
principally polarized abelian surface in characteristic $p\ne 2$.
 
 As we discussed in the introduction, in characteristic 2, the Kummer surfaces are still defined but 
they are not of Kummer type. We also explained how the 
 geometry of the Kummer surface of a principally polarized abelian surface $A$ in characteristic 
$p\ne 2$ is  related to the geometry 
of the sets of six points in $\bbP^1$. 
Namely, the double cover of $\bbP^1$ ramified over a set of six points is a smooth genus two 
curve $C$, and one can associate to $C$  
the Kummer surface $\Kum(\Jac(C))$ of the Jacobian variety $\Jac(C)$. 
When $A$ is not a simple abelian surface but rather the product 
$E_1\times E_2$ of two elliptic curves, we replace six points on $\bbP^1$ with  six points on 
a stable rational curve $C$ consisting of two irreducible components with 3 points on each component. 
The double cover of $C$ 
of degree 2 ramified over 6 points (and the intersection point of the components) is 
isomorphic to the union of two elliptic curves $E_1$ and $E_2$ intersecting at one point. Its 
generalized Jacobian variety is isomorphic to $E_1\times E_2$.  

\vskip3pt
  The following example of a K3 surface of Kummer type of index $10$ is less known.
 
 \begin{example} A \emph{Traynard surface} is a quartic surface in $\bbP^3$ over an 
 algebraically closed field $k$ of characteristic $p\ne 2$
 with two sets of disjoint lines 
 $\calA$ and $\calB$ that form a symmetric configuration $(16_{10})$. 
  These surfaces were constructed by Traynard \cite{Traynard} (see \cite{Godeaux} where the surfaces are named after Traynard). 
  Not being aware of Traynard's work, W. Barth and I. Nieto rediscovered the Traynard surfaces in 
  \cite{Barth}. The surfaces are embedded Kummer surfaces of simple abelian surfaces $A$ with polarization of type 
  $(1,3)$. The negation involution acts on the linear space $H^0(A,\calO_A(2\Theta))$, where $\Theta$ is a symmetric 
  polarization divisor.
  The eigensubspace $V$ with eigenvalue equal to $-1$ is of dimension 4. The linear 
  system $|V|\subset 
  |2\Theta|$ has base points at all $2$-torsion points of $A$ and 
  defines a finite map of degree 2 of the blow-up of these points to $\bbP^3$ with image a smooth 
  quartic surface $X$.
 The images of the exceptional curves over the torsion points form a set $\calA$ of 16 lines on $X$.
 The unique symmetric theta divisor  $\Theta$ is a curve of genus $4$, it passes through $10$ torsion points, 
 and the images of the translates of $\Theta$ by $2$-torsion points provides another set 
 $\calB$ of 16 disjoint lines on $X$.
 \end{example} 
  
  The next proposition is due to N. Shepherd-Barron \cite[Corollary 13]{S-B}:
  
  \begin{prop}\label{nick} Let $X$ be a K3 surface over a field of characteristic $2$. Suppose that $X$ contains 
  $\ge 13$ disjoint $(-2)$-curves. 
  Then it is unirational, and, in particular, a supersingular surface.
  \end{prop}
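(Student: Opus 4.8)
The plan is to exploit the interplay between the large orthogonal configuration inside $\mathrm{NS}(X)$ and the classification of K3 surfaces in characteristic $2$ by their Artin invariant, reducing everything to a statement about supersingular K3 surfaces and then invoking the known unirationality of those. First I would argue that the existence of $\ge 13$ disjoint $(-2)$-curves forces $X$ to be supersingular. If $X$ were not supersingular, then $\rho(X) \le 20$, but more to the point, one uses the fact (due to Rudakov--Shafarevich, Nygaard, Ogus) that in characteristic $2$ a K3 surface which is not supersingular has finite discrete structure on its crystalline cohomology that cannot accommodate so many orthogonal $(-2)$-classes together with an ample class; concretely, the classes $[A_1],\dots,[A_{13}]$ span a negative-definite sublattice of rank $13$ in the primitive part $\mathrm{NS}(X) \cap (\text{ample})^\perp$, which has rank $\le \rho(X) - 1 \le 21$. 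That alone is not a contradiction, so the real input is the divisibility/discriminant constraint: the sublattice generated by the $[A_i]$ and suitable half-sums (in characteristic $\ne 2$ one would form $\tfrac12\sum_{i\in I}[A_i] \in \mathrm{NS}(X)$ for even-cardinality subsets $I$, the Nikulin argument) is where characteristic $2$ behaves differently, and it is precisely this failure that one must track.

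The key steps, in order. Step 1: pass to the supersingular case, using Shepherd-Barron's \cite{S-B} analysis that $13$ disjoint $(-2)$-curves cannot fit on a K3 of finite height — this is where one cites \cite[Corollary 13]{S-B} or reproduces its lattice-theoretic heart. Step 2: on a supersingular K3 surface $X$ with Artin invariant $\sigma$, the Néron--Severi lattice $\mathrm{NS}(X)$ is the unique even lattice of signature $(1,21)$ and discriminant group $(\bbZ/p\bbZ)^{2\sigma}$ with $p = 2$; a negative-definite rank-$13$ root sublattice of type $A_1^{13}$ embedding primitively-ish into it, together with the constraint that each $A_i$ is an actual effective $(-2)$-curve, pins down $\sigma$ to be small (one expects $\sigma \le 2$, perhaps forced to be $1$ or $2$). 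Step 3: for supersingular K3 surfaces of small Artin invariant in characteristic $2$, unirationality is known — this is the classical work on supersingular K3's in characteristic $2$ (Rudakov--Shafarevich, Shioda, and for characteristic $2$ specifically the recent results making all supersingular K3's in small characteristic unirational); alternatively one produces a quasi-elliptic or genus-one fibration on $X$ using part of the $(-2)$-curve configuration and reads off unirationality directly, since quasi-elliptic K3 surfaces in characteristic $2$ are unirational. Step 4: assemble — $X$ unirational $\Rightarrow$ supersingular (Shioda, Artin), giving the "in particular" clause, though logically we already know supersingularity from Step 1, so this is just the standard remark.

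The main obstacle I expect is Step 2--3: controlling the Artin invariant from the configuration data and then getting genuine unirationality rather than mere supersingularity. Knowing $X$ is supersingular does not by itself give unirationality for all Artin invariants in characteristic $2$ unless one invokes the (now available) theorem that every supersingular K3 surface in characteristic $2$ is unirational; if one wants a self-contained argument, the cleanest route is to extract from the $13$ disjoint $(-2)$-curves an elliptic or quasi-elliptic pencil $|F|$ on $X$ — e.g.\ realize one of the $A_i$ as a component of a fiber, or build $F$ as a combination $F = 2A_1 + \sum n_j A_j$ of the given curves with $F^2 = 0$ and $F$ nef — and then show the generic fiber has nontrivial $p$-torsion in its Jacobian or is a cuspidal cubic, forcing quasi-ellipticity in characteristic $2$ and hence unirationality via the standard purely-inseparable-base-change construction. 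The bookkeeping of which linear combinations of the $A_i$ land in the nef cone, and checking that the resulting fibration is not of general-type-obstructing kind, is the delicate part; everything else is either citation (\cite{S-B}) or standard supersingular K3 lattice theory.
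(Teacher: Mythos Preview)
The paper does not supply a proof of this proposition at all: it simply attributes the result to Shepherd-Barron and cites \cite[Corollary~13]{S-B}. So there is nothing to compare your argument against in the paper itself.

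That said, your sketch misrepresents the actual mechanism behind Shepherd-Barron's result, and the logical order is inverted. You propose to first deduce supersingularity from lattice constraints (Step~1), and only then derive unirationality. But as you yourself note, a rank-$13$ negative-definite sublattice of type $A_1^{13}$ fits comfortably inside a lattice of signature $(1,\rho-1)$ for any $\rho\ge 14$, so no contradiction with finite height arises from lattice theory alone; the appeal to ``divisibility/discriminant constraints'' is not made precise and does not obviously work in characteristic~$2$, where Nikulin's half-sum divisibility argument is unavailable. When you then say ``this is where one cites \cite[Corollary~13]{S-B},'' you are invoking the very statement you are trying to prove.

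The genuine idea in \cite{S-B} is of a completely different nature: it is a foliation argument, not a lattice argument. From a sufficiently large set of disjoint $(-2)$-curves in characteristic~$2$ one builds a $1$-foliation (a saturated sub-invertible sheaf of the tangent sheaf closed under $p$-th powers), and the quotient of $X$ by this foliation is a rational surface over which $X$ is purely inseparable of degree~$2$. Unirationality is thus established \emph{directly}, and supersingularity is the corollary (via the Shioda--Artin implication unirational $\Rightarrow$ supersingular), not the other way around. Your Steps~2--4, which attempt to bound the Artin invariant and then quote unirationality of supersingular K3's, are therefore unnecessary detours resting on an unproven Step~1.
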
 
  
  The following is an example of a K3 surface of Kummer type of index $4$ in characteristic $2$ \cite{DolgKondo2}.
  
  \begin{example}\label{DK} Let  $X$ be a supersingular K3 surface with the Artin invariant $\sigma_0$ equal to $1$. 
  The isomorphism class of $X$ is unique. The surface contains a quasi-elliptic 
  pencil with $5$ reducible fibers of type 
  $\tilde{D}_4$ and $16$ disjoint sections. The union of non-multiple irreducible components of 
  four reducible fibers is a set $\calA$ of $16$ disjoint $(-2)$-curves. 
  Another set is formed by the $16$ sections. 
  Each section intersects one non-multiple component in each fiber, and this easily gives that the sets 
  $\calA,\calB$ form a symmetric configuration of type $(16_4)$. So the surface is of K3 type of index $4$ in five different ways.

  \end{example}
  In the next sections, we give three different constructions of a $3$-dimensional familiy of supersingular K3 surfaces of 
  Kummer type and index $6$ in characteristic $2$. Its general member is a supersingular K3 surface with Artin 
  invariant $\sigma_0$ equal to $4$.

\section{Weddle surfaces}
There is an explicit relationship between sets of six points in $\bbP^1$ and Kummer surfaces. 
One uses the Veronese map to 
put the six points $p_1,\ldots,p_6$ on a twisted cubic $R_3$ in $\bbP^3$. 
The discriminant surface of the web $L$ of quadric surfaces through this set of 
six points is isomorphic (if $p\ne 2$) to $\Kum(\Jac(C))$. The curve $C$ is isomorphic to the 
double cover of $R_3$ branched over the six points $p_1,\ldots,p_6$.

In the case $p\ne 2$, the \emph{Weddle surface} $W$ is defined to be the locus of singular points of 
 quadrics from the web $L$. Equivalently, it can be defined as the closure of the locus of points 
 $x\in \bbP^3$ such the projections of the points $p_1,\ldots,p_6$ from $x$ lie on a conic.
 
We may choose the projective coordinates to assume that
\begin{eqnarray}\label{six}
\begin{split}
&p_1 = [1,0,0,0],\ p_2 = [0,1,0,0],\ p_3 = [0,0,1,0],\\
&p_4 = [0,0,0,1],\  p_5 = [1,1,1,1],\ 
p_6 = [a,b,c,d],
 \end{split}
\end{eqnarray}
where the point $p_6$ does not lie in any plane spanned by three of the points $p_i, i< 6$.   
Then the equation of the Weddle surface is 
\beq\label{eqn:Hutchinson}
 \det\begin{pmatrix}ayzw&x&1&a\\
bxzw&y&1&b\\
cxyw&z&1&c\\
dxyz&w&1&d\end{pmatrix} = 0\eeq
\cite[\S 97]{Hudson}.
One checks that, in all characteristics, a quartic surface $W$ given by this equation has ordinary double points 
$p_1,\ldots,p_6$. The surface $W$ contains the lines $\la p_i,p_j\ra$ and the twisted cubic $R_3$ 
through $p_1,\ldots,p_6$, all with multiplicity 1.

Conversely, counting parameters, we obtain that a general quartic surface in $\bbP^3$ containing six lines 
$\la p_i,p_j\ra$ and the twisted cubic 
$R_3$ passing through the points $p_1,\ldots,p_6$ is given by \eqref{eqn:Hutchinson} 

If $p = 2$, the symmetric matrix of the polar bilinear form of quadrics from the web $L$
is an alternating form, so the discriminant surface is given by the 
pfaffian, hence it is a quadric surface. However, a quadric with polar bilinear form of corank $2$ can still
have an isolated singular point. In fact, one checks that the geometric description of the Weddle surface still holds in characteristic $2$
 
Another peculiarity of the case $ p = 2$ is that $W$ has an additional singular point 
$$P:= [\sqrt{a},\sqrt{b},\sqrt{c},\sqrt{d}].$$
A direct computation of the resolution of this singular point shows that it is rational double point of type $D_4$.

\begin{prop} A minimal nonsingular model of the Weddle surface is a supersingular K3 surface of Kummer type and index 6.
\end{prop}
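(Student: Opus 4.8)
The plan is to construct the minimal resolution $\pi\colon X\to W$ explicitly, to exhibit two sets of sixteen disjoint $(-2)$-curves on it, and to invoke Proposition~\ref{nick} for supersingularity. First, $X$ is a K3 surface: as recorded above, $W$ is a quartic surface whose only singularities are rational double points --- six nodes at $p_1,\dots,p_6$ and one $D_4$-point at $P$ --- and it is normal, being reduced along each line and along the twisted cubic it carries. Adjunction on $\mathbb P^3$ gives $\omega_W\cong\mathcal O_W$; rational double points are crepant, so $\omega_X\cong\pi^*\omega_W\cong\mathcal O_X$; and the ideal-sheaf sequence of the quartic together with rationality of the singularities gives $H^1(X,\mathcal O_X)=H^1(W,\mathcal O_W)=0$.

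Next I would write down the curves. Let $E_1,\dots,E_6$ be the $(-2)$-curves over the six nodes, and note the further four $(-2)$-curves of the exceptional $D_4$-configuration over $P$. For $1\le i<j\le 6$ let $\tilde L_{ij}$ be the strict transform of the line $\la p_i,p_j\ra\subset W$ and $\tilde R_3$ that of the twisted cubic $R_3$ through $p_1,\dots,p_6$; being smooth rational, these are $(-2)$-curves on $X$. Finally, for each of the ten partitions $k=\{i,j,l\}\sqcup\{m,n,q\}$ of $\{1,\dots,6\}$ into two triples, the plane $\la p_i,p_j,p_l\ra$ cuts $W$ along the three sides of the triangle $p_ip_jp_l$ and a residual line, which one computes to be $M_k=\la p_i,p_j,p_l\ra\cap\la p_m,p_n,p_q\ra$; thus $M_{\{i,j,l\}}=M_{\{m,n,q\}}$, there are ten such lines $M_k\subset W$ passing through no singular point of $W$, and their strict transforms $\tilde M_k$ are again $(-2)$-curves. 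I then set $\calA=\{\tilde L_{ij}\}_{i<j}\cup\{\tilde R_3\}$ and $\calB=\{E_i\}_{i=1}^{6}\cup\{\tilde M_k\}_k$, each a set of sixteen curves.

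The third step is to check disjointness and incidence. For six points in general position the only coincidences among the fifteen lines $\la p_i,p_j\ra$ and the twisted cubic occur at the $p_i$, where the tangent directions involved are pairwise distinct, so the strict transforms separate on the $E_i$; hence the curves of $\calA$ are pairwise disjoint. The $E_i$ are pairwise disjoint, each $M_k$ avoids all seven singular points of $W$ (so $\tilde M_k$ misses every $E_i$ and every curve of the $D_4$-configuration over $P$), and the ten lines $M_k$ are pairwise skew; hence the curves of $\calB$ are pairwise disjoint. For incidence: $E_i$ meets $\tilde R_3$ and the five $\tilde L_{ij}$ with $j\ne i$; $\tilde R_3$ meets $E_1,\dots,E_6$ and no $\tilde M_k$; $\tilde L_{ij}$ meets $E_i$, $E_j$ and exactly the four $\tilde M_k$ for which $\{i,j\}$ lies in a triple of $k$; and $\tilde M_k$ meets exactly the six $\tilde L_{ij}$ that are sides of the two triangles of $k$. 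So each curve of $\calA$ meets exactly six curves of $\calB$ and conversely: $(\calA,\calB)$ is a symmetric $(16_6)$-configuration, i.e. $X$ is of Kummer type of index $6$. Finally, $\calA$ already consists of $16\ge 13$ disjoint $(-2)$-curves, so Proposition~\ref{nick} shows $X$ is unirational, hence supersingular.

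The main obstacle will be the incidence bookkeeping of the third step in characteristic $2$: one must verify with the explicit equation \eqref{eqn:Hutchinson} that the fifteen lines $\la p_i,p_j\ra$, the ten residual lines $M_k$ and the twisted cubic $R_3$ have exactly the stated mutual incidences on the singular quartic $W$ --- in particular that neither $R_3$ nor the $M_k$ meet the exceptional locus over the $D_4$-point $P$ in a way that would create spurious intersections --- together with the once-and-for-all checks that the singular locus of $W$ is precisely $\{p_1,\dots,p_6,P\}$ and that $W$ is reduced along every line and the cubic it contains, so that the resolution $\pi$ and its exceptional divisors are as described.
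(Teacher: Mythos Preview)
Your proposal is correct and follows essentially the same route as the paper: both take $\calA$ to be the strict transforms of the fifteen lines $\la p_i,p_j\ra$ together with the twisted cubic, and $\calB$ to be the six nodal exceptional curves together with the ten residual lines $\ell_{ijk}=\Pi_{ijk}\cap\Pi_{lmn}$, and then verify the $(16_6)$ incidence. The only difference is in the supersingularity step---the paper counts $(-2)$-curves to force Picard number $22$, whereas you invoke Proposition~\ref{nick}, which is arguably cleaner.
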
 

\begin{proof} Since singular points of $W$ are rational double points, 
its minimal nonsingular model $X$ is a K3 surface.
The proper transforms $E_{ij}$ of the lines $\ell_{ij} = \la p_i,p_j\ra$ and the proper transform $E_0$ of the twisted cubic $R_3$ 
is a set of 16 disjoint smooth rational curves 
on $X$. Let $E_i, i = 1,\ldots,6,$ be the exceptional curves over the nodes of $W$, and 
$\ell_{ijk}$ be the residual line in the intersection of $W$ with the plane $\Pi_{ijk} = \la p_i,p_j,p_k\ra$. 
The plane 
$\Pi_{lmn}$ with $\{i,j,k\}\cap \{l,m,n\} = \emptyset$ intersects $\Pi_{ijk}$ along a line $\ell$. 
It intersects $W$ at three points 
on lines $\ell_{ij}, \ell_{ik}, \ell_{jk}$ and $\ell_{lm}, \ell_{ln}, \ell_{mn}$. It follows that $\ell$ coincides with the line 
$\ell_{ijk}$. Thus we find another set of disjoint $(-2)$-curves $E_{ijk}$, the proper transforms of the lines 
$\ell_{ijk}$. It is immediate to check that the set 
$\calA$ of sixteen  $(-2)$-curves $E_0, E_{ij}$ and the set $\calB$ of $(-2)$-curves $E_i,E_{ijk}$ 
form an abstract symmetric configuration $(16_6)$ isomorphic to the Kummer configuration.
The surface $X$ has $20$ disjoint $(-2)$-curves; 16 of them come from the Kummer configuration, and four come from the 
resolution of $P$. 
 hence its Picard number is equal to $22$, and hence $X$ is a supersingular K3 surface.

\end{proof}

It was noticed by Hutchinson \cite{Hutchinson} that the Weddle surface admits a cubic Cremona involution: 
$$T: [x,y,z,w] \mapsto [a/x,b/y,c/z,d/w].$$
This works also in characteristic $2$, but instead of $8$ fixed points of $T$ outside $W$, there is a unique fixed point 
lying on $W$. This is the singular point $P$. Under the involution $T$ the curves $R_3$ and the line $\ell_{56}$ interchange.
This allows us to find the parametric form of $R_3$:
$$[s,t]\mapsto [\frac{a}{s+ta},\frac{b}{s+tb},\frac{c}{s+tc}, \frac{d}{s+td}].$$
We identify $T$ with its biregular lift to the nonsingular model $X$ of $W$. Then 
$T(E_1) = E_{234}, T(E_2) = E_{134}, T(E_3) = E_{124}, T(E_4)=E_{123}$. Any other line $\ell_{ijk}$ intersects 
two opposite edges of the coordinate tetrahedron, hence they form three orbits with respect to $T$.

\begin{lem}\label{pencil} The surface $X$ contains a quasi-elliptic pencil invariant with respect to $T$. It has three 
reducible fibers of type $\tilde{D}_4$ and 
eight reducible fibers of type $\tilde{A}_1^*$. The involution $T$ fixes one of the fibers of type $\tilde{D}_4$ and switches other 
fibers in pairs.  
\end{lem}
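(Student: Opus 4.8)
The plan is to realise the pencil through an explicit $\tilde D_4$-configuration of $(-2)$-curves, deduce $T$-invariance and quasi-ellipticity, and then locate all reducible fibres using the curves of the Kummer configuration together with the Euler number $e(X)=24$ and the classification of quasi-elliptic fibrations in characteristic two. First I would build the pencil. The twisted cubic $R_3$ passes simply through each node $p_i$, so on $X$ one has $E_0\cdot E_i=1$ for $1\le i\le 6$, while the $E_i$ are pairwise disjoint $(-2)$-curves; hence $F:=2E_0+E_1+E_2+E_3+E_4$ satisfies $F^2=0$, $F\cdot E_0=F\cdot E_i=0$, and $E_0,E_1,E_2,E_3,E_4$ span a $\tilde D_4$-diagram with central curve $E_0$. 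Since $F$ meets every irreducible curve nonnegatively it is nef; moreover the $\tilde D_4$-divisor cannot be a multiple fibre (its components do not all have the same multiplicity), so $F$ is primitive and $|F|$ is a base point free pencil defining a genus one fibration $f\colon X\to\bbP^1$ with $2E_0+E_1+E_2+E_3+E_4$ as a fibre of type $\tilde D_4$.

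Next I would check $T$-invariance and produce more $\tilde D_4$-fibres. From $T(E_0)=E_{56}$ and $T(E_1)=E_{234},\,T(E_2)=E_{134},\,T(E_3)=E_{124},\,T(E_4)=E_{123}$ one gets $T(F)=2E_{56}+E_{123}+E_{124}+E_{134}+E_{234}$; the incidences of the Kummer configuration (the line $\ell_{56}$ is coplanar with each of $\ell_{123},\ell_{124},\ell_{134},\ell_{234}$, meeting it off the nodes; these four lines are pairwise disjoint; and $R_3$ meets none of them) show both that this is again a $\tilde D_4$-configuration's class and that $F\cdot T(F)=0$. As $\operatorname{Pic}(X)$ has signature $(1,21)$ it contains no isotropic plane, whence $T(F)=F$; thus $f$ is $T$-invariant, and the two $\tilde D_4$-fibres $2E_0+E_1+E_2+E_3+E_4$ and $2E_{56}+E_{123}+E_{124}+E_{134}+E_{234}$ are exchanged by $T$. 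The induced involution of $\bbP^1$ is an involution in characteristic $2$, hence has a unique fixed point, so $f$ has exactly one $T$-fixed reducible fibre and interchanges the others in pairs. Finally, since $R_3$ and the lines $\ell_{ij}$ avoid the extra node $P$, the four exceptional $(-2)$-curves $N_0,N_1,N_2,N_3$ over $P$ (with $N_0$ central) satisfy $F\cdot N_j=0$; being connected they lie in one fibre, which therefore has a $\tilde D_4$-subdiagram with doubled central curve and so is of type $\tilde D_4$, and since $T$ fixes $P$ it is the $T$-fixed fibre. Hence $f$ has at least three fibres of type $\tilde D_4$, one of them $T$-fixed.

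Then I would prove quasi-ellipticity and count the $\tilde A_1^*$-fibres. Each of the six residual lines $\ell_{ijk}$ with $\{i,j,k\}\not\subset\{1,2,3,4\}$ has $F\cdot E_{ijk}=0$ but is a smooth $(-2)$-curve, so it is a proper component of a reducible fibre; these six fibres are pairwise distinct (the $\ell_{ijk}$ are pairwise disjoint) and distinct from the three $\tilde D_4$'s. Were $f$ elliptic, then $\sum_v e(F_v)=e(X)=24$ with $e(F_v)\ge 6$ on each $\tilde D_4$-fibre and $e(F_v)\ge 2$ on each of the six fibres above, forcing $\ge 3\cdot 6+6\cdot 2=30>24$; so $f$ is quasi-elliptic. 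For a quasi-elliptic fibration $\sum_v\bigl(e(F_v)-2\bigr)=e(X)-4=20$, and by the classification of fibres of quasi-elliptic fibrations in characteristic $2$ the number $e(F_v)-2$ equals $1$ on an $\tilde A_1^*$-fibre and is $\ge 4$ on every other reducible fibre. Since the three $\tilde D_4$'s contribute $12$ and the six residual-line fibres contribute at least $6$, the latter must contribute exactly $1$ each, so are of type $\tilde A_1^*$, and there remain exactly two further reducible fibres, again of type $\tilde A_1^*$. This yields three fibres of type $\tilde D_4$ and eight of type $\tilde A_1^*$; matching with the unique fixed point of the involution of $\bbP^1$, the $T$-fixed fibre is the $\tilde D_4$ over $P$, the other two $\tilde D_4$'s form a single $T$-orbit, and the eight $\tilde A_1^*$-fibres form four $T$-orbits, as asserted.

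The main obstacle will be the first two steps carried out with care: the bookkeeping of intersection numbers in the Kummer configuration and at the node $P$ (in particular $E_0\cdot E_i=1$, the values of $F\cdot E_{ij}$ and $F\cdot E_{ijk}$, and $F\cdot N_j=0$), and the correct use of $e(X)=24$ and the characteristic-$2$ classification of quasi-elliptic fibre types; it is exactly this combination that simultaneously forces quasi-ellipticity and leaves no room for reducible fibres besides the eleven listed. By contrast, the ``extra'' fibre components — the fourth leaf of the $\tilde D_4$ over $P$ and the partners of the six residual lines — need not be exhibited, their existence and tangency type being dictated once the fibres have been located.
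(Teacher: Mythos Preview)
Your argument is correct and follows essentially the same route as the paper: the same divisor $F=2E_0+E_1+E_2+E_3+E_4$, the same image $T(F)=2E_{56}+E_{123}+E_{124}+E_{134}+E_{234}$, the third $\tilde D_4$ coming from the resolution of $P$, the six remaining $E_{ijk}$ giving reducible fibres, and the Euler-number contradiction forcing quasi-ellipticity with two extra $\tilde A_1^*$-fibres. You are simply more explicit at several points the paper leaves implicit---the signature-$(1,21)$ reason that $F\cdot T(F)=0$ forces $[T(F)]=[F]$, the characteristic-$2$ fact that a nontrivial involution of $\bbP^1$ has a unique fixed point, and the quasi-elliptic bookkeeping $\sum_v(e(F_v)-2)=20$ pinning down the remaining fibre types---so your write-up is a fleshed-out version of the paper's proof rather than a different one.
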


\begin{proof} Let $F = 2E_0+E_1+E_2+E_3+E_4$. It is immediate to check that $F$ is an effective nef divisor of arithmetic 
genus one and type $\tilde{D}_4$ (type $I_0^*$ in Kodaira's notation), hence $|F|$ is a genus one pencil on $X$. The image of $F$ under 
the involution $T$ is equal to  $G = 2E_{56}+E_{234}+E_{123}+E_{124}+E_{134}$. Since $G\cdot F = 0$, $G$ is another fiber 
of type $\tilde{D}_4$ and the pencil $|F|$ is $T$-invariant. The invariant fiber is the unique member of $|F|$ that contains the singular point $P$.
It is of type $\tilde{D}_4$. We also have six more reducible fibers $E_{ijk}+E_{ijk}'$, where $E_{ijk}$ is 
different from the components 
of $G$. Suppose that the pencil $|F|$ is an elliptic pencil. Adding up the Euler-Poincar\'e characteristics of 
the reducible fibers, we see that the sum is greater or equal than $3\times 6+2\times 6 = 30 > 24$. 
This contradiction shows that the
genus one pencil$|F|$  must 
be quasi-elliptic, also that there must be two more fibers of types $\tilde{A}_1^*$ (of type III in Kodaira's notation).
\end{proof} 

Since our family of supersingular K3 surfaces depends on three parameters (the projective equivalence classes of six points $p_1,\ldots,p_6)$, the Global Torelli Theorem for supersingular 
K3 surfaces suggests that the Artin invariant $\sigma_0$ of $X$ is equal to four. The previous lemma can be used to confirm this.

\begin{prop} The Artin invariant $\sigma_0$ of a general $X$ is equal to $4$.
\end{prop}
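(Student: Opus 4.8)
The plan is to read off $\sigma_0$ from the discriminant of the N\'eron--Severi lattice, using Artin's theorem that a supersingular K3 surface $X$ in characteristic $p$ satisfies $|\mathrm{disc}\,\mathrm{NS}(X)|=p^{2\sigma_0}$; so it suffices to show $|\mathrm{disc}\,\mathrm{NS}(X)|=2^{8}$ for general $X$. I would extract this from the $T$-invariant quasi-elliptic pencil $|F|$ of Lemma~\ref{pencil}. Let $\calT\subset\mathrm{NS}(X)$ be its trivial lattice, generated by a general fiber, a fixed section, and the fiber components disjoint from that section. By Lemma~\ref{pencil} the reducible fibers are three of type $\tilde D_4$ and eight of type $\tilde A_1^{*}$, so $\calT\cong U\oplus D_4^{\oplus 3}\oplus A_1^{\oplus 8}$, a lattice of rank $22=\rho(X)$. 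Hence $\calT$ is of finite index in $\mathrm{NS}(X)$, Shioda--Tate gives $\mathrm{NS}(X)/\calT\cong\MW(X)$, and
\[
|\mathrm{disc}\,\calT|=|\mathrm{disc}\,U|\cdot|\mathrm{disc}\,D_4|^{3}\cdot|\mathrm{disc}\,A_1|^{8}=1\cdot 4^{3}\cdot 2^{8}=2^{14}.
\]
Consequently $|\mathrm{disc}\,\mathrm{NS}(X)|=2^{14}/|\MW(X)|^{2}$, that is $\sigma_0=7-\log_2|\MW(X)|$, and the proposition is equivalent to $|\MW(X)|=8$ for general $X$.

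The inequality $|\MW(X)|\ge 8$, hence $\sigma_0\le 4$, I would obtain by exhibiting eight sections. Since $F=2E_0+E_1+E_2+E_3+E_4$, since the curves of $\calA$ are pairwise disjoint (so $E_{ij}\cdot E_0=0$), and since $\ell_{ij}$ contains $p_k$ exactly when $k\in\{i,j\}$, one computes $E_{ij}\cdot F=\#(\{i,j\}\cap\{1,2,3,4\})$. Thus $E_{ij}$ is a section precisely when exactly one of $i,j$ lies in $\{1,2,3,4\}$; these are the eight curves $E_{15},E_{25},E_{35},E_{45},E_{16},E_{26},E_{36},E_{46}$ of $\calA$, and they are mutually disjoint. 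So $\MW(X)$ has at least eight elements. (Since for a quasi-elliptic fibration in characteristic $2$ the Mordell--Weil group is $2$-elementary, one expects $\MW(X)\cong(\bbZ/2)^{3}$, these eight sections being the group.)

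For the opposite inequality $\sigma_0\ge 4$ I would argue by dimensions. The surfaces $W$, and hence the K3 surfaces $X$, form a family parametrized by the six points $p_1,\dots,p_6$ modulo projective equivalence, a rational variety of dimension $6-3=3$; and one checks that the induced map to the moduli space of supersingular K3 surfaces in characteristic $2$ is generically finite --- a general $X$ recovers, up to finite ambiguity, its Weddle model $W$ (whose singular locus is $6A_1+D_4$, the $D_4$-point being the one fixed by the Cremona involution $T$), and hence the six points. Because the locus of supersingular K3 surfaces with Artin invariant $\le\sigma$ is closed of dimension $\sigma-1$, the general value $s_0$ of $\sigma_0$ on our family satisfies $3\le s_0-1$, i.e. $s_0\ge 4$. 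Together with $s_0\le 4$ this yields $\sigma_0=4$.

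The main obstacle is the dimension argument of the last paragraph: one must check carefully that a general $X$ determines the six points up to finitely many choices (equivalently, that the quartic Weddle polarization is essentially unique on a general member), and that $\sigma_0$ is constant on a dense open of the parameter space. A more self-contained alternative --- avoiding moduli of supersingular K3 surfaces altogether --- would be to prove directly that $\MW(X)\cong(\bbZ/2)^{3}$ for general $a,b,c,d$, by enumerating the $(-2)$-curves on $X$ meeting $F$ once and showing that a ninth section would impose a further algebraic relation on $a,b,c,d$. With either input, the computation of $\sigma_0$ reduces to the root-lattice bookkeeping and the handful of intersection numbers above.
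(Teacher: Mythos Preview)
Your argument is correct and follows essentially the same route as the paper: the same quasi-elliptic pencil $|F|$, the same trivial lattice $U\oplus D_4^{\oplus 3}\oplus A_1^{\oplus 8}$, the same eight sections $E_{k5},E_{k6}$ ($k=1,\dots,4$) feeding into Shioda--Tate to get $\sigma_0\le 4$, and the same $3$-dimensional-family count for $\sigma_0\ge 4$. You are more explicit than the paper about the one genuine gap---that the map from the parameter space to moduli must be generically finite---which the paper simply asserts by writing ``since our family is $3$-dimensional.''
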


\begin{proof} Recall that the Artin invariant $\sigma_0$ of a supersingular K3 surface is equal to 
the half of the rank of the elementary abelian $2$-group equal to the discriminant group of the Picard lattice. 
We use the pencil $|F|$ from Lemma \ref{pencil}. The sublattice $M$ spanned by irreducible components of fibers modulo the divisor class of a fiber
is isomorphic to orthogonal sum of three copies of the root lattices $D_4$ and $8$ copies of the root latice of type 
$A_1$. The pencil has eight disjoint sections $E_{k5},E_{k6}, k = 1,2,3,4$, hence the Mordell-Weil group 
$\textrm{MW}(|F|)$ of the quasi-elliptic fibration $|F|$ contains a subgroup 
isomorphic to $(\bbZ/2\bbZ)^{\oplus 3}$. The Shioda-Tate discriminant formula 
$$|\mathrm{discr}(\mathrm{Pic}(X)|\cdot \#\textrm{MW}(|F|)^2 = |\mathrm{discr}(M)|$$
gives $\sigma_0\le 4$. Since our family is $3$-dimensional, we get  $\sigma_0\ge 4$. This proves the assertion.
\end{proof}

\begin{remark} It is known that the sum of $16$ disjoint $(-2)$-curves on a K3 surface in 
characteristic $p\ne 2$ is divisible by $2$, and the corresponding $\mu_2$-cover is birationally isomorphic 
to an abelian surface. I believe that in our case the sum $\Sigma$ of 16 disjoint curves $R,E_{ij}$ or $E_i,E_{ijk}$ is not 
divisible by $2$ if $X$ is a general member of the family. If it were, the Kummer $\mu_2$-cover of $X$ with the branch divisor 
$\Sigma$ is a non-normal surface (since $c_2(\Omega_X(\Sigma)) = -8$). It is known that every supersingular surface of Artin invariant 
$\sigma_0\le 3$ is birationally isomorphic to the quotient of the self-product of a rationa  
cuspidal curve by the infinitesimal group scheme $\boldsymbol{\mu}_2$ \cite{KondoSchroer}. The action depends on two parameters and the 
the quotient surface has 16 rational double points of type $A_1$ and one point of type $D_4$. I do not know whether 
the surfaces can be realized as degenerations of Weddle quartic surfaces.  
\end{remark}

 As in the case where $p\ne 2$, we can consider a rational map $\tau:\bbP^3\dasharrow L^* = \bbP^3$ given by the 
 web of quadrics $L$. The map factors through a separable Artin-Schreier cover of degree $2$, 
 the covering involution is defined 
 by an element of order $2$ in the normal subgroup $G\cong (\bbZ/2\bbZ)^5$ of the Weyl group $W(D_6)$ that leaves  
 invariant the projective orbit of the ordered point set $(p_1,\ldots,p_6)$ in the Coble-Cremona action of 
 $W(D_6)$ on the GIT-quotient $P_3^6$ of six ordered points in $\bbP^3$ (see \cite[\S 36]{Coble}, \cite{DO}).
It follows that the cover $\bbP^3$ is birationally isomorphic to a hypersurface
$$y_4^2+q_2(y_0,y_1,y_2,y_3)y_4+q_4(y_0,y_1,y_2,y_3) = 0$$
in the weighted projective space $\bbP(1,1,1,1,2)$, where 
$Q:= V(q_2)$ is the pfaffian hypersurface of the linear system $L$.
The Weddle surface is the ramification locus of this cover, and it is birationally isomorphic 
to an inseparable $\mu_2$-cover of the quadric $V(q_2)$ defined by the invertible sheaf $\calO_Q(B)$, where 
$B = V(q_2,q_4)$.

The rational map $\tau$ lifts to a separable regular degree 2 map
$\tilde{\tau}:\mathrm{Bl}_{p_1,\ldots,p_6}(\bbP^3)\to \bbP^3$. Its Stein factorization consists of a birational morphism 
$\mathrm{Bl}_{p_1,\ldots,p_6}(\bbP^3)\to Y$ that blows down the proper transforms of $R_3, \ell_{ij}$ and an Artin-Schreier 
finite map $f:Y\to \bbP^3$. The known formula for the canonical sheaf of an Artin-Schreier cover gives 
that the branch divisor of $f$ is the union of the quadric $Q$ and six planes, the images of the exceptional divisors 
of the blow-up. It follows that the branch divisor of the inseparable double cover $X\to Q$ is equal to the union 
of six conics, the images of the curves $E_1,\ldots,E_6$. They intersect at one point $x_0$, the image of $E_0$.

Let $\pi_{x_0}:Q\dasharrow \bbP^2$ be the projection of $Q$ to a plane with center at $x_0$. The projections of the conics 
are six lines $V(l_1),\ldots,V(l_6)$ in general linear position. The projections of conics $\tau(\ell_{ijk})$ are ten conics, each passing through 
six of the 15 intersection points $q_{ij} = l_i\cap l_j$. The differential $d\Phi$ of the curve $\Phi= V(l_1\cdots l_6)$ has $21$ 
zeros counted with multiplicities and each two conics intersect at two zeros outside the curve $\Phi$ \cite{Shimada}.
In fact, the ten conics intersect at two of these points, the projections of the lines on $Q$ passing through $x_0$.

The additional zero of $d\Phi$ comes with multiplicity four. It is equal to the projection of the image on $Q$ of 
the singular point of the Weddle surface $W$.

By choosing projective coordinates in the plane such that 
$$l_1 = x,\ l_2 = y, \ l_3 = z, \ l_4 = x+y+z,\  l_5 = a_1x+a_2y+a_3z,\ l_6 = b_1x+b_2y+b_3z,$$
one directly checks that the condition that there exists a conic through the intersection points 
$P_{ij} = V(l_i,l_j), 1\le i< j\le 3$ and $P_{mn} = V(l_m,l_n), 4\le m < n\le 6$ is equivalent to the condition that 
the six lines are dual to six points lying on a smooth conic $C$ in the dual plane. Also, 
an explicit computation shows that the line joining the two common points of the ten conics coincides with 
the line $\ell$ dual to the conic $C$. So, the double plane model of $X$:
$$w^2 + l_1\cdots l_6 = 0$$
 is an analog in characteristic $2$ of the double plane model of a Kummer quartic surface, where instead of the dual 
 line $\ell$ we 
have a contact conic to the lines.

 \begin{remark} In \cite{Shimada} I. Shimada gave a classification of supersingular K3 surfaces birationally isomorphic 
 to an 
 inseparable double plane with the branch curve $V(\Phi)$ of degree $6$. In this classification, it is assumed that the differential 
 $d\Phi$ has 21 simple zeros. Our surface does not appear in his list because $d\Phi$ has a multiple zero corresponding to the 
  singular point $P$ of $W$ of type $D_4$.
\end{remark}

\section{Six points in $ \bbP^1$}
We learned that a set of six points in $\bbP^1$ in arbitrary characteristic 
leads to a K3 surface of Kummer type of index 6 birationally isomorphic to a Weddle surface. We see the projective equivalence class 
of six points in different ways: the six points $p_1,\ldots,p_6$ on a twisted cubic $R_3$, the six points dual to the lines 
$V(l_i)$ in its double plane birational model, six intersection points of the lines lying on a conic, six intersection points 
of the lines $V(l_i)$ with the line $\ell$.
It is classically known that the  GIT-quotient $P_1^6:= (\bbP^1)^6/\!/\mathrm{PGL}_2(k)$ with respect to the 
democratic linearization 
is isomorphic to the Segre cubic primal $\Sigma_3$ representing the unique projective isomorphism class of a cubic 
hypersurface in $\bbP^4$ with $10$ ordinary nodes \cite[Chapter 1, \S 3]{DO}, \cite[Theorem 9.4.10]{CAG}. 
An equation of $\Sigma_3$ in all characteristics can be 
chosen to be the following:
\beq\label{segrec}
x_1x_2x_4-x_0x_3x_4-x_1x_2x_3+x_0x_1x_3+x_0x_2x_3-x_0^2x_3 = 0.
\eeq
The ten singular points of $\Sigma_3$ are: 
\begin{eqnarray}
\begin{split}
&[0,0,0,0,1],\ [0,0,0,1,0],\ [0,0,1,0,0],\ [0,1,0,0,0],\ [0,1,0,1,1],\\
&[0,0,1,1,1],\  [1,1,0,0,0],\  [1,0,1,0,0],\  [1,0,0,0,1],\ 
 [1,1,1,1,0].
 \end{split}
\end{eqnarray}
If $p\ne 2$, one can transform equation \eqref{segrec} to the familiar form
$$x_0^3+\cdots+x_4^3-(x_0+\ldots+x_4)^3 = 0,$$
which exhibits obvious $\frakS_6$-symmetry of the equation. 
 
 If $p = 2$, the symmetry is not obvious. Fix the set of reference points $p_1,\ldots,p_5$ in $\bbP^3$ from \eqref{six}. 
 An explicit rational parameterization 
 \beq\label{mapphi}
 \phi:\bbP^3\dasharrow \Sigma_3 \subset \bbP^4
 \eeq
 is given  by quadrics through the reference points:  
 $$[x_0,x_1,x_2,x_3,x_4] = [t_3(t_0+t_1),t_3(t_1+t_3),t_2(t_0+t_1),t_2(t_1+t_3),(t_0+t_2)(t_1+t_3)].$$
The action of the symmetric group $\frakS_6$ on $\Sigma_3$ is induced by its rational action on $\bbP^3$.
The transpositions 
$(01),(12),(2,3),(34)$ of $\frakS_6$ are given by transpositions $(01),(12),(23), (34)$ of 
coordinates $(t_0,t_1,t_2,t_3)$. The transposition $(45)$ acts by the standard Cremona involution 
$(t_0,t_1,t_2,t_3)\mapsto (\frac{1}{t_0},\frac{1}{t_1},\frac{1}{t_2},\frac{1}{t_3})$.
The action of $\frakS_6$ on $\Sigma_3$ is induced by a $5$-dimensional linear representation of $\frakS_6$: 
\begin{eqnarray*}
(12)&:&(x_0,\ldots,x_4)\mapsto (x_0,x_0+x_1,x_2,x_2+x_3,x_0+x_2+x_4),\\
(23)&:&(x_0,\ldots,x_4)\mapsto (x_0+x_1,x_1,x_0+x_1+x_4,x_1+x_3,x_0+x_2),\\
(34)&:&(x_0,\ldots,x_4)\mapsto (x_2,x_3,x_0,x_1,x_0+x_2+x_4),\\
(45)&:&(x_0,\ldots,x_4)\mapsto (x_0,x_1,x_0+x_2,x_1+x_3,x_0+x_1+x_4),\\
(56)&:&(x_0,\ldots,x_4)\mapsto (x_2,x_0+x_1+x_2+x_4,x_0,x_3+x_4,x_4).
\end{eqnarray*}
As in the case $p\ne 2$, the linear representation is an irreducible 
representation of $\frakS_6$ corresponding to the partition 
$\lambda = (3,3)$. 

 If $p\ne 2$, the dual hypersurface $\Sigma_3^*$ is  
a quartic hypersurface. The group $\frakS_6$ acts linearly 
in the dual projective space via its action on the partial derivatives of $\Sigma_3$. 
It defines an irreducible linear representation of $\frakS_6$ corresponding to the partition $(2,2,2)$.
In appropriate dual coordinates, $\Sigma_3^*$ can be given by the following equations in $\bbP^5$:
\beq\label{igusaq}
(\sum_{i=0}^5y_i^2)^2-4\sum_{i=0}^5y_i^4 = \sum_{i=0}^5y_i =0,
\eeq
which exhibit obvious $\frakS_6$-symmetry. The quartic hypersurface $\Sigma_3^*$ is isomorphic to the Igusa 
compactification of the 
moduli space $\mathcal{A}_2(2)$ of principally polarized abelian surfaces with a level two structure. 
For this reason, in modern literature, the quartic 
$\Sigma^*$ is called the \emph{Igusa quartic}, although, in classical literature, it was known as the \emph{Castelnuovo quartic}. For any 
smooth point $x\in \Sigma_3^*$, the tangent hyperplane at  $x$ cuts out $\Sigma_3^*$ along a quartic surface with 16 ordinary nodes. 
This is the Kummer surface of the Jacobian variety of the genus two curve associated to the corresponding point 
from $\Sigma_3$ \cite[p. 141]{Coble}. 

 The double cover of $\bbP^4$ branched along $\Sigma_3^*$ admits a modular interpretation as the GIT-quotient 
 $P_2^6: = (\bbP^2)^6/\!/\mathrm{PGL}_3(k)$ \cite{Coble}, \cite[Chapter 1]{DO}. Its equation in $\bbP(1,1,1,1,2]$ is 
 \beq\label{coblevar}
 w^2+F_4(y_0,\ldots,y_4) = 0,
 \eeq
 where we rewrite equations \eqref{igusaq} by eliminating $y_5$.\footnote{In modern literature, the  $4$-fold 
 given by \eqref{coblevar} is known as the Coble four-fold.} The involution $w\mapsto -w$ corresponds to the 
 association involution, and its locus of fixed points is the GIT-quotient of the subvariety of $(\bbP^2)^6$ of 
 ordered sets of points lying on a conic.
 
 If $p = 2$, the GIT-quotient $P_2^6$ is still defined. It is isomorphic to a hypersurface $\mathcal{V}$
 in $\bbP(1,1,1,1,1,2)$ given by equation:
\beq
w^2+w(y_2y_3+y_1y_4+y_0(y_0+y_1+y_2+y_3+y_4))+y_0y_1y_4(y_0+y_1+y_2+y_3+y_4) = 0.
\eeq
 The projection to the $y$-coordinates defines a separable double cover $\mathcal{V}\to \bbP^4$ branched over the quadric 
 $Q = V(q)$, where 
 $$q = y_2y_3+y_1y_4+y_0(y_0+y_1+y_2+y_3+y_4).$$
  The involution $w\mapsto w+q$ has the same geometric meaning as in 
 the case $p\ne 2$. The locus of fixed points $F$ of the involution is isomorphic to the inseparable double 
 cover of $Q\subset \bbP^4$. 
 As in the case $p \ne 2$,
 $F$ is singular over the pre-images of $15$-lines. They represent the closed semistable orbits of point sets 
 of the form $(a,a,b_1,b_2,b_3,b_4)$. Each line contains three points representing the closed semi-stable orbits 
 of point sets $(a,a,b,b,c,c)$. The incidence relation between the lines and the points is the famous Cremona--Richmond symmetric configuration 
 $(15_3)$.
 
The duality fails if $p = 2$: the Hessian of the cubic polynomial defining $\Sigma_3$ is identically zero. 
 The group $\frakS_6$ still acts on $\mathcal{V}$ via its action on $P_2^6$ defining a linear representation 
 in the dual space $\bbP^4$ corresponding to the partition $(2,2,2)$:
\begin{eqnarray*}
(12)&:&(y_0,\ldots,y_4)\mapsto (y_0,y_1,y_2,y_0+y_1+y_3,y_0+y_2+y_4),\\
(23)&:&(y_0,\ldots,y_4)\mapsto (y_0,y_3,y_4,y_1,y_2),\\
(34)&:&(y_0,\ldots,y_4)\mapsto (y_1,y_0,y_2,y_3,y_0+y_1+y_2+y_3+y_4),\\
(45)&:&(y_0,\ldots,y_4)\mapsto (y_0,y_2,y_1,y_4,y_3),\\
(56)&:&(y_0,\ldots,y_4)\mapsto (y_0,y_1,y_0+y_1+y_2,y_3,y_0+y_3+y_4),\\
\end{eqnarray*}
The quadric $Q$ is invariant with respect to the representation. Since the partition $(2,2,2)$ is not $2$-regular,
the linear representation is reducible \cite{James}. 
In fact, one observes that the vector $(1,1,1,1,1)$ is invariant.

 \begin{thm} Let $x\in \Sigma_3 \subset \bbP^4$ be a nonsingular point and $Q_x$ be the polar quadric of 
 $\Sigma_3$ with pole 
 at $x$. The pre-image $X$ of $Q_x$ under the map $\phi:\bbP^3\dasharrow \bbP^4$ is isomorphic to the 
 Weddle surface associated with $6$ points $(p_1,\ldots,p_5,p_6 = \phi^{-1}(x))$.
 \end{thm}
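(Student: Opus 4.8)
The plan is to reduce the theorem to a single differentiation of the identity that exhibits $\Sigma_3$ as the image of $\phi$. Write $\phi=(q_0,\dots,q_4)$ for the five quadrics displayed after \eqref{mapphi} and $F$ for the cubic \eqref{segrec}. Since $\phi$ parameterises $\Sigma_3=V(F)$ we have $F\bigl(q_0(t),\dots,q_4(t)\bigr)\equiv 0$ in $t=(t_0,\dots,t_3)$; differentiating in each $t_k$ gives
\[
\sum_{i=0}^{4}\Bigl(\tfrac{\partial F}{\partial z_i}\circ\phi\Bigr)(t)\,\frac{\partial q_i}{\partial t_k}(t)=0,\qquad k=0,\dots,3,
\]
so the length-$5$ vector $\nabla F\circ\phi$ lies, at every $t$, in the left kernel of the $5\times 4$ Jacobian matrix $J(t)=\bigl(\tfrac{\partial q_i}{\partial t_k}(t)\bigr)_{i,k}$, which for general $t$ is one-dimensional.

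Next I would recognise the Weddle surface inside that kernel. As recalled above, $\phi$ is given by the linear system of quadrics through the reference points $p_1,\dots,p_5$, so a nonzero $\mu\in k^5$ determines a quadric $Q_\mu=\sum_i\mu_i q_i$ through $p_1,\dots,p_5$, and $Q_\mu$ is singular at $t$ exactly when $\mu$ lies in the left kernel of $J(t)$. Put $x=\phi(p_6)$, i.e.\ $x_i=q_i(p_6)$; then $Q_\mu(p_6)=\sum_i\mu_i q_i(p_6)=\langle\mu,x\rangle$, so $Q_\mu$ passes through $p_6$ --- equivalently, lies in the web $L$ of quadrics through all six of $p_1,\dots,p_6$ --- precisely when $\langle\mu,x\rangle=0$. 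By the characteristic-free description of $W$ recalled in \S2 (the locus of singular points of the quadrics of $L$), a general point $t$ lies on $W$ if and only if the --- up to scalar unique --- quadric singular at $t$ lies in $L$; and since for general $t$ the left kernel of $J(t)$ is spanned by $\nabla F\circ\phi(t)$, this holds if and only if $\langle\nabla F\circ\phi(t),x\rangle=0$. Finally $\langle\nabla F\circ\phi(t),x\rangle=\sum_i x_i\,\tfrac{\partial F}{\partial z_i}(\phi(t))=(P_xF)(\phi(t))$, where $P_xF=\sum_i x_i\,\partial F/\partial z_i$ is the polar quadric. Hence $W\subseteq V\bigl((P_xF)\circ\phi\bigr)=\phi^{-1}(Q_x)=X$.

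Both $W$ and $X$ are quartic surfaces in $\bbP^3$: $P_xF$ is a quadratic form, the $q_i$ are quadratic forms, and $(P_xF)\circ\phi$ is not identically zero, since a quadric cannot contain the irreducible cubic $\Sigma_3$ while $\phi$ is dominant onto $\Sigma_3$. For general $x$ the points $p_1,\dots,p_5,p_6=\phi^{-1}(x)$ are in general position, so $W$ is the irreducible, reduced Hutchinson quartic \eqref{eqn:Hutchinson}; an irreducible quartic surface contained in a quartic surface must equal it, so $X=W$, and $W$ is by construction the Weddle surface associated with $(p_1,\dots,p_5,\phi^{-1}(x))$. The remaining nonsingular $x$ follow from the same argument applied to the $x$-independent identity $(P_xF)\circ\phi=c_0\,\langle x,v(t)\rangle$, $c_0\in k^{\times}$, where $v(t)$ is the vector of maximal minors of $J(t)$, both of whose zero loci equal $W$.

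I expect the genuine work to lie in the bridge of the second paragraph: verifying that the five explicit quadrics defining $\phi$ do span the whole system of quadrics through $p_1,\dots,p_5$ --- equivalently, that the base scheme of $\phi$ is exactly those five points --- so that $L=x^{\perp}$ really is the web through all six of $p_1,\dots,p_6$, and that under the coordinates of \eqref{mapphi}--\eqref{six} the point $\phi^{-1}(x)$ is the $p_6$ occurring in \eqref{eqn:Hutchinson}. Granting the computations already present in the paper this is a finite check, after which the remaining steps (one differentiation, the rank count for $\ker J(t)$, and the degree comparison forcing $X=W$) are formal. An equivalent and perhaps more down-to-earth route, which also pins down the scalar, is simply to substitute the formula for $\phi$ into $P_xF$ and match the resulting quartic with \eqref{eqn:Hutchinson} directly.
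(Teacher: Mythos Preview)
Your argument is correct and takes a genuinely different route from the paper's. The paper proceeds geometrically: it observes that $\phi^{-1}(Q_x)$ is a quartic with nodes at $p_1,\dots,p_5$ (since $\phi$ is given by quadrics through these points), acquires a sixth node at $p_6=\phi^{-1}(x)$ because the polar quadric $Q_x$ is tangent to $\Sigma_3$ at $x$, and contains all fifteen lines $\langle p_i,p_j\rangle$ together with the twisted cubic $R_3$ (checked via the polar property that $Q_x$ contains the six lines on $\Sigma_3$ through $x$ and passes through the ten nodes of $\Sigma_3$). It then invokes the parameter count from \S2 that a general quartic through those sixteen curves is the Hutchinson determinant \eqref{eqn:Hutchinson}. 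Your approach bypasses this characterisation entirely: the chain rule identifies $\nabla F\circ\phi$ with the left kernel of the Jacobian at a general $t$, you read that kernel as the unique quadric through $p_1,\dots,p_5$ singular at $t$, and the pairing with $x$ is exactly the condition that this quadric also pass through $p_6$. This is more direct and makes the equality of the two quartic equations transparent; the paper's route, by contrast, exhibits the incidence geometry (lines on $\Sigma_3$, nodes, the twisted cubic) that is used elsewhere in the section.

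One characteristic-$2$ caveat you should make explicit: the equivalence ``$Q_\mu$ is singular at $t$ iff $\mu$ lies in the left kernel of $J(t)$'' fails in the direction $\Leftarrow$ when $p=2$, since for a quadric $\nabla Q_\mu(t)=0$ no longer forces $Q_\mu(t)=0$ via Euler. For the specific $\mu_0=\nabla F(\phi(t))$ this is rescued by Euler for the cubic $F$: one has $\langle\nabla F(z),z\rangle=3F(z)=F(z)$, which vanishes on $\Sigma_3$, so $Q_{\mu_0}(t)=\langle\mu_0,\phi(t)\rangle=0$ and your argument goes through unchanged.
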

 
 \begin{proof} Since the map $\phi$ from \eqref{mapphi} is given by the web of quadrics $L$, the pre-image of $Q_x$ is a quartic surface $W$ in $\bbP^3$ with 
 double points at $p_1,\ldots,p_5$. Since $Q_x$ is tangent to $\Sigma_3$ at the point $x$, 
 the quartic acquires an additional double point 
 at $p_6 = \phi^{-1}(x)$. The images of the lines $\ell_i = \la p_i,p_6\ra, 
 i = 1,\ldots,5,$ are lines 
 on $\Sigma_3$ passing through $x$. It is known that the polar quadric $Q_x$ 
 intersects $\Sigma_3$ at points $y$ such that the tangent hyperplane of $\Sigma_3$ at $y$ 
 contains $x$ \cite[Theorem 1.1.5]{CAG}.
 This implies that the five lines $\phi(\ell_i)$ are contained in $Q_x$, and hence 
 the lines $\ell_i$ are contained in $W$. Since $Q_x$ passes through singular points of $\Sigma_3$,
 the lines $\la p_i,p_j\ra, 1\le i<j\le 5$ are also contained in $W$. 
 
 Let $R_3$ be the unique twisted cubic through the six points $p_1,\ldots,p_6$. Its image in $\Sigma_3$ is the 
 sixth line in $\Sigma_3$ passing through $x$. By above, it is also contained in $Q_x$, hence
 $W$ contains $R_3$. It follows from Section 2 that $W$ is a Weddle surface.
 \end{proof}

\section{Congruences of lines and quartic del Pezzo surfaces}
 
A congruence of lines in $\bbP^3$ is an irreducible surface $S$ in the Grassmannian $\bbG:= G_1(\bbP^3)$ of lines in $\bbP^3$.
A line $\ell_s$ in $\bbP^3$ corresponding to a point  $s\in S$ is called a ray of the congruence. 
The algebraic cycle class $[S]$ of $S$ in the Chow ring $A^*(\bbG)$ is determined by two numbers, the order $m$ and the class $n$.
The order $m$ (resp. the class $n$) is equal to the number of rays passing through a general point $x$ in $\bbP^3 $ 
(resp. contained in a general plane $\Pi\subset \bbP^3$). We have $[S] = m\sigma_x+n\sigma_\Pi$, where 
$\sigma_x$ (resp. $\sigma_\Pi$) is the algebraic cycle class of 
 an $\alpha$-plane $\Omega(x)$ of lines through a point $x\in \bbP^3$ (resp. 
of a $\beta$-plane $\Omega(\Pi)$ of 
lines contained in a plane $\Pi$). The degree of the surface $S$ in the 
Pl\"ucker embedding $\bbG\hookrightarrow \bbP^5$ is equal to $m+n$.

The universal family of rays $Z_S = \{(x,s)\in \bbP^3\times S:x\in \ell_s\}$ comes with two projections 
$p_S:Z_S\to \bbP^3$ and $q_S:Z_S\to S$. 

We assume that $m = n = 2$ and $S$ is smooth. Then $S$ is a quartic 
del Pezzo surface in its Pl\"ucker embedding that coincides with its anti-canonical embedding. It follows that 
$S$ is contained in a hyperplane section $H\cap \bbG$, a linear complex of lines. 

By the definition of the order of a congruence, the cover $p_S:Z_S\to \bbP^3$ is of degree $2$.  It is known 
that $S$ does not contain fundamental curves, i.e. curves in $\bbP^3$ over which the fibers are one-dimensional. Thus 
the cover $p_S$ is a finite cover over the complement of a finite set of points. 

Let us assume now that $p\ne 2$ and see later what happens in the case $p = 2$. 
Although the classical theory of congruences of lines
assumes that the ground field is the field of complex number, 
all the facts are true only assuming that $p$ does not divide the order and the 
class (see a brief exposition of the theory of congruences in \cite[\S 2]{DolgReid}). 
The cover $p_S:Z_S\to \bbP^3$ is 
a Kummer type double cover branched along the \emph{focal surface} $\Foc(S)$ of $S$. 
The focal surface is a quartic 
Kummer surface with 16 nodes. The congruence is one of the six irreducible components of order $2$ of the 
surface of bitangent lines to $\Foc(S)$. If the Pl\"ucker equation of $\bbG$ is taken to be  
$\sum_{i=1}^6x_i^2 = 0$, the equations of the six congruences of bitangents are $x_i = 0$.

 The pre-image of a ray $\ell_s$ under $p_S$ in $Z_S$ is equal to the union of the fiber 
$q_S^{-1}(s)$ (that can be identified with $\ell_s$) and a curve $L_s$ which is projected to 
$C(s) = S\cap \bbT_s(\bbG)$ under the map $q_S:Z_S\to S$. The intersection points $L_s\cap q_S^{-1}(s)$ are the 
pre-images of the tangency points of $\ell_s$ with $\Foc(S)$. The map $L_s\to C(s)$ is the
 normalization map, the points 
in $L_s\cap q_S^{-1}(s)$ correspond to the branches of $C(s)$ at the singular point $s\in C(s)$. The locus 
of the pairs of points $L_s\cap q_S^{-1}(s)$ defines a double cover $q_S':X\to S$ of the ramification divisor $X$ of $p_S$. 
The ramification curve $R$ of $q_S'$
is the locus of 
the pre-images in $Z_S$ of points in $\Foc(S)$, where a ray $\ell_s$ is tangent to $\Foc(S)$ with multiplicity $4$.
The branch curve $B$ of $q_S'$ is the locus of points $s\in S$ such that the curve $C(s)$ has a cusp at $s$. It is known that 
$B\in |-2K_S|$ \cite[(2.9)]{DolgReid}. The curve $B$ is cut out by a quadric in $\bbP^5$. 
The adjunction formula shows that $X$ is a
 K3 surface. 
 
 The first projection $p_S:X\to \Foc(S)$ is a minimal resolution of singularities. The fibers $E(x_i)$ over 
 the singular points $x_i$ of $\Phi(S)$ form a set $\calA$ of 16 disjoint $(-2)$-curves. Another set $\calB$ of 
 $16$ disjoint $(-2)$-curves is obtained as the intersection of the plane $\Pi(x_i)$ swept by the 
 rays from $\Omega(x_i)$ with $\Foc(S)$. The plane $\Pi(x_i)$ is tangent to $\Foc(S)$ 
 along a conic. In classical terminology, such a plane is a 
 \emph{trope} and the corresponding conic is a \emph{trope-conic}. The map $T(x_i)\to E(x_i)$ is defined by the deck transformation 
 of the cover $q_S:X\to S$. It follows  that each line on $S$ splits under the cover $q_S:X\to S$. This is a remarkable 
 property 
 of the curve $B$: it is a curve in $|-2K_S|$, which is tangent to all lines contained in $S$.

 Assume now that $p = 2$. We still have a 
 realization of a quartic del Pezzo surface $S$ as a congruence of lines in $\bbP^3$ of order 2. It is equal to a hyperplane 
 section of quadratic line complex $\frakC$ which we may assume to be smooth. It follows that the order and the class of $S$ is equal to $2$.

 By definition  
 of the order of a congruence, the projection $p_S:Z_S\to \bbP^3$ is a map of degree $2$. Its general fiber is equal 
 to the intersection of the smooth conic $\Omega(x)\cap \frakC$ with a hyperplane section of $\bbG$. 
 Since $S$ is smooth, it consists 
 of two points.  This shows that the map $p_S$ is separable.  Let $Z_S\to Z_S'\to \bbP^3$ be its Stein factorization, 
 where the first map is a birational 
 morphism and the second map is a separable finite morphism of degree $2$. 
 Since $H^1(\bbP^3,\calO_{\bbP^3}(n)) = 0, n \ge 0$, the cover $Z_S'\to \bbP^3$ is an 
 Artin-Schreier cover. The known formula for the canonical class of the universal family of lines $Z_\bbG$ over 
 $\bbG$ \cite[10.1.1]{CAG} easily gives that 
 $\omega_{Z_S}\cong p_S^*\calO_{\bbP^3}(-2)\otimes q_S^*\omega_S(1)$. The formula for the canonical sheaf of 
 an Artin-Schreier cover shows that $Z_S'$ is given by an equation:
 \beq\label{kummer}
 x_4^2+F_2(x_0,x_1,x_2,x_3)x_4+F_4(x_0,x_1,x_2,x_3) = 0,
 \eeq
 where $Q = V(F_2)$ is a quadric and $V(F_4)$ is a quartic surface. The quartic polynomial $F_4$ is defined up to 
 a replacement $F_4$ with $A^2+AF_2+F_4$,
where $A$ is a quadratic form.

\begin{remark} As comminicated to me by T. Katsura, one can give an explicit equation of the quadric $V(F_2)$ in terms of 
the equation of a congruence of lines $S$ of order $2$ and class $2$. If 
$\bbG_1(\bbP^3) =  V(x_1y_1+x_2y_2+x_3y_3)\subset \bbP^5$ and $S$ is given by equations
\begin{eqnarray*}
a_1x_1y_1+a_2x_2y_2+a_3x_3y_3+c_1y_1^2+c_2y_2^2+c_3y_3^2 = 0,\\
\alpha_1x_1+\alpha_2x_2+\alpha_3x_3+\beta_1y_1+
\beta_2y_2+\beta_3y_3 = 0.
\end{eqnarray*}
then 
\beq
\begin{split}
&F_2 = (a_1+a_3)(\alpha_2x_0x_2+\beta_2x_1x_3)+(a_2+a_3)(\alpha_1x_0x_1+\beta_1x_2x_3)\\
&+
(a_1+a_2)(\alpha_3x_0x_3+\beta_3x_1x_2).
\end{split}
\eeq
\end{remark}

\vskip4pt
The following proposition is an analog  of the 
 description of $S$ as an irreducible component of the surface of bitangent lines to $\Foc(S)$. 
 
 \begin{prop} The congruence $S$ is an irreducible component of the locus of  
 points in $\bbG$ parametrizing lines in $\bbP^3$ that split under the cover $p_S:Z_S\to \bbP^3$ into two 
 irreducible components. In particular, no ray of the congruence is contained in the quadric $Q = V(F_2)$. 
\end{prop}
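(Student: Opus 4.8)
The plan is to work with the Stein factorization $Z_S \to Z_S' \to \bbP^3$ and the Artin–Schreier equation \eqref{kummer}, and to characterize when the preimage of a line $\ell \subset \bbP^3$ in $Z_S'$ (equivalently in $Z_S$) splits into two components. First I would recall that for a general line $\ell$ the fiber of $p_S$ over a point $x \in \ell$ consists of two points, so $p_S^{-1}(\ell)$ is a double cover of $\ell \cong \bbP^1$; it is either an irreducible (possibly singular) rational curve of arithmetic genus $0$, or it splits into two components. Restricting the equation \eqref{kummer} to $\ell$, parametrized by $[s,t]$, yields a binary form identity $x_4^2 + f_2(s,t)\, x_4 + f_4(s,t) = 0$ with $\deg f_2 = 2$, $\deg f_4 = 4$; the cover $p_S^{-1}(\ell) \to \ell$ splits exactly when this Artin–Schreier equation over $k(s,t)$ is reducible, i.e.\ when there is a form $g(s,t)$ with $g^2 + f_2 g + f_4 = 0$, which forces $\deg g = 2$ and in particular $f_2 \ne 0$ (if $f_2 \equiv 0$, the cover is the inseparable double cover $x_4^2 = f_4$, which is irreducible as $f_4$ is not a square for generic $\ell$). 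So the splitting locus is contained in the locus of lines $\ell$ for which $F_2|_\ell$ is not identically zero \emph{and} the restricted quartic decomposes appropriately. Conversely, exactly as in the classical $p \ne 2$ case, a ray $\ell_s$ of the congruence has the property that $p_S^{-1}(\ell_s)$ contains the fiber $q_S^{-1}(s) \cong \ell_s$ as a component, hence splits: this is the content of the fiber structure of $p_S$ over the rays discussed in Section 4. Thus $S$ is contained in the splitting locus.

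Next I would argue that $S$ is an \emph{irreducible component} of this locus, not a proper subvariety of a larger irreducible piece. The splitting locus $\Sigma \subset \bbG$ is a determinantal-type subvariety: writing a variable line as the join of two points and substituting into \eqref{kummer}, the condition "$\exists g$ with $g^2 + f_2 g + f_4 = 0$" is a closed condition cut out by polynomial equations in the Plücker coordinates, so $\dim \Sigma \ge 2$ along $S$. To see $S$ is a whole component, I would use a dimension/degree count in $\bbG$: the surface $S$ is cut out by a quadric and a hyperplane in the Plücker $\bbP^5$, and the constructions of Section 4 already identify $p_S : Z_S \to \bbP^3$ with the K3-type double cover whose branch data is \eqref{kummer}; the lines that split form, as in the $p \ne 2$ picture, a small number of two-dimensional families (the analogs of the six congruences of bitangents with equations $x_i = 0$), and $S$ must be one of them since it is irreducible of dimension $2$ and already contained in $\Sigma$. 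Alternatively — and this is cleaner — I would show that through a general point $x \in \bbP^3$ only finitely many lines split: a line $\ell$ through a general $x$ meets $\Foc$-type ramification in the "wrong" way unless $\ell$ is a ray, because a splitting line must pass through the two branch points of the local double cover in a constrained position; hence the family of splitting lines has the same numerical character $(m,n)=(2,2)$-type behavior near $S$, pinning $S$ down as a component.

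For the final clause — that no ray is contained in $Q = V(F_2)$ — I would argue by contradiction: if a ray $\ell_s$ were contained in $Q$, then $F_2|_{\ell_s} \equiv 0$, so over $\ell_s$ the cover \eqref{kummer} degenerates to the purely inseparable equation $x_4^2 = F_4|_{\ell_s}$; its normalization is $\ell_s$ itself mapping purely inseparably, so $p_S^{-1}(\ell_s)$ would be irreducible, contradicting the fact (established above, from the fiber structure of $p_S$ over rays) that $p_S^{-1}(\ell_s)$ contains $q_S^{-1}(s)$ as a proper component and hence splits. I would also need to check that $p_S^{-1}(\ell_s)$ genuinely has two distinct components for a ray — this is where I would invoke that $S$ contains no fundamental curves (stated in Section 4), so $q_S^{-1}(s) \cong \ell_s$ is one-dimensional and maps isomorphically to $\ell_s$ under $p_S$, giving a section-like component disjoint in general from the residual component $L_s$.

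The main obstacle I anticipate is the irreducibility-of-the-component claim in the second paragraph: showing that $S$ does not sit inside a strictly larger irreducible subvariety of the splitting locus. In characteristic $p \ne 2$ this is handled by the explicit identification of the six bitangent congruences via $x_i = 0$; in characteristic $2$ the clean symmetry is lost (as the introduction and the discussion of the Igusa/Segre duality emphasize), so I would likely need either an explicit local computation with the Artin–Schreier equation \eqref{kummer} restricted to a one-parameter family of lines, or a deformation argument showing the splitting locus is smooth of dimension $2$ at a general ray $\ell_s \in S$. Everything else — the restriction-to-a-line analysis, the contradiction for rays inside $Q$ — is routine once the fiber structure of $p_S$ over rays from Section 4 is in hand.
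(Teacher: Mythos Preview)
Your approach is essentially the same as the paper's. The paper's proof is extremely brief: it observes that the fiber $q_S^{-1}(s)$ maps isomorphically to $\ell_s$ under $p_S$, so $p_S^{-1}(\ell_s)$ decomposes as $q_S^{-1}(s) \cup L_s$ where $L_s$ projects under $q_S$ to the tangent hyperplane section $C(s) = \bbT_s(\bbG)\cap S$; and if $\ell_s \subset Q$ then the restriction of $p_S$ over $\ell_s$ is purely inseparable, so the pre-image cannot split, a contradiction. This is exactly your fiber-structure argument for splitting and your contradiction argument for the last clause.

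Where you diverge is in the attention you give to the ``irreducible component'' assertion. You correctly identify this as the delicate point and propose dimension/degree counts or a smoothness computation. The paper, by contrast, does not justify this within the proof at all: it simply records that $S$ lies in the splitting locus and moves on. The implicit justification appears only later, where the splitting locus is noted to be a congruence (hence two-dimensional) of class $28$, so any irreducible surface contained in it is automatically a component. So your concern is legitimate, but the paper's own proof does not resolve it either; you are holding yourself to a higher standard than the source.

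One small over-complication: your opening paragraph analyzing the restricted Artin--Schreier equation $x_4^2 + f_2 x_4 + f_4 = 0$ and the existence of $g$ with $g^2 + f_2 g + f_4 = 0$ is correct but unnecessary for the forward direction --- the fiber-structure argument already gives splitting of rays without any equation-level work. The equation analysis is only needed (and only in the degenerate form ``$f_2 \equiv 0$ forces inseparability'') for the final clause, which is how the paper uses it.
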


\begin{proof} The fiber $q_S^{-1}(s)$ maps isomorphically to the ray $\ell_s$ under the projection 
$p_S:Z_S\to \bbP^3$.
Thus the pre-image $p_S^{-1}(\ell_s)$ is equal to the union of $q_S^{-1}(s)$ and a curve $L_s$ whose points 
are the pre-images 
of rays intersecting $\ell_s$. The image of $L_s$ in $S$ under the projection $q_S:Z_S\to S$ is equal to the hyperplane section $C(s):=\bbT_s\cap S$.
If $\ell_s\subset Q$, then the restriction of $p_S$ over $\ell_s$ is a purely inseparable cover, so the pre-image of $\ell_s$ does not split.
\end{proof}

\vskip4pt
Note the last assertion is an analog of the fact that $\Foc(S)$ does not contain lines.

\vskip4pt
 I do not know how to describe explicitly the locus of splitting lines 
under a separable double cover. However, the condition for splitting of a line is clear. A separable cover 
$y^2+a_k(t_0,t_1)y+b_{2k}(t_0,t_1) = 0$ of a line with coordinates $t_0,t_1$ is reducible if and only if 
$b_{2k} = a_kc_k+c_k^2$ for some binary form $c_k$ of degree $k$. 

The pre-image of a general plane $\Pi$ under the map $p_S$ in $Z_S$ is a separable double cover given by equation 
$$w^2+a_2(t_0,t_1,t_2)w+b_4(t_0,t_1,t_2) = 0,$$
where $(t_0,t_1,t_2)$ are coordinates in $\Pi$. The double cover is isomorphic to a del Pezzo surface of degree $2$. It is known that it has 
$28$ 
lines which are split under the cover. They correspond to $56$ $(-1)$-curves on the del Pezzo surface. 
The splitting lines are discussed 
in \cite{DM}, where they are called \emph{fake bitangent lines}. The variety of splitting lines is a 
congruence in $\bbG$ of class equal to $28$. It is an analog in characteristic $2$ of the congruence of bitangents 
of a Kummer surface.  Its order is known to be equal to $12$, and its class is equal to $28$. Also is known that 
the congruence of bitangents of a Kummer surface consists 
of six irreducible congruences of order $2$ and class $2$ and $16$ $\beta$-planes $\Omega(T)$, where $T$ is a trope. 
It is natural to conjecture that 
the congruence of splitting lines is also of degree 12 and $S$ is one of its six irreducible 
components of order $2$.

A general ray $\ell_s$ intersects the quadric $Q$ at two points, the pre-images of these two points in $Z_S$ correspond to 
the branches of the singular point $s\in C(s)$. This shows that the double cover $q_S:X\to S$ parameterizing the branches of the curves $C(s)$ is a separable Artin-Schreier cover 
of degree $2$.  In the blow-up plane model of $S$, $X$ is isomorphic to a surface of degree $6$ in the weighted projective space 
$\bbP(1,1,1,3)$ given by equation:
 \beq\label{kummer2}
 x_3^2+F_3(x_0,x_1,x_2)x_3+F_6(x_0,x_1,x_2) = 0.
 \eeq
 By the adjunction formula, $\omega_X\cong \calO_X$. A ray $\ell_s$ defines a cusp of $C(s)$ at $s$ if and only if it is tangent to the quadric $Q$. It is known that the lines in $\bbP^3$ 
 tangent to a smooth quadric surface are parametrized by the tangential quadratic line complex $\calT(Q)$. 
 It is singular along the locus of lines contained in $Q$ \cite[Proposition 10.3.23]{CAG}. Since 
 $\calT(Q)\cap S\in |-2K_S|$, we see that $\calT(Q)$ is tangent to $S$ along the curve $B= V(F_3)\in |-K_S|$.
 This differs from the case $p\ne 2$, where the branch curve $B$ belongs to $|-2K_S|$.
 
 Let $L_i$ be one of sixteen lines on the del Pezzo surface $S\subset \bbG$. 
 A line in $\bbG$ is a pencil of rays contained in a plane, i.e. $L_i = \Omega(x_i)\cap \Omega(\Pi_i)$ for
  some $x_i$ in a plane $\Pi_i$. All rays 
 $\ell_s, s\in L_i,$ 
 pass through $x_i$, hence the fiber $E(x_i)$ of $Z_S\to \bbP^3$ over $x_i$ is equal to the fiber 
 of the projections $p_S:X\to \bbP^3$. This implies that 
$X$ is singular over the point $x_i$. So, we have $16$ points 
 $x_i\in Q$, over which the map $p_S$ is not a finite morphism. The points are analogs of singular points of $\Foc(S)$. 
 We have also $16$ planes $\Pi(x_i)$, they are swept by the rays $\ell_s,s\in L_i$. Each plane $\Pi(x_i)$   
 intersects $Q$ along a conic $T(x_i)$. They are characteristic two analogs 
  of trope-conics of $\Foc(S)$. Both curves $E(x_i)$ and the proper transforms of $T(x_i)$'s 
  in $X$ are mapped to the 
  line $L_i$, so the line $L_i$ splits under the separable cover $q_S:X\to S$. This defines two sets 
  $\calA$ (of curves $E(x_i)$) and $\calB$ (of curves $T(x_i)$) of disjoint $(-2)$-curves on $X$.
  
  \begin{thm} Let $\tilde{X}$ be a minimal resolution of singularities of $X$. Then $\tilde{X}$ is a K3 surface 
  of Kummer type of index $6$ birationally isomorphic to a Weddle quartic surface.
  \end{thm}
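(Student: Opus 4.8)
The plan is to prove the three assertions separately: that $\tilde X$ is a K3 surface, that the pair $(\calA,\calB)$ makes it a K3 surface of Kummer type of index $6$, and that it is birationally isomorphic to a Weddle quartic surface.

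\emph{Step 1: $\tilde X$ is a K3 surface.} By the adjunction computation already recorded for \eqref{kummer2} we have $\omega_X\cong\calO_X$. Since $B=V(F_3)\in|-K_S|$, the separable double cover $q_S\colon X\to S$ fits into an exact sequence $0\to\calO_S\to (q_S)_*\calO_X\to\omega_S\to 0$ of $\calO_S$-modules, and $H^1(S,\calO_S)=H^1(S,\omega_S)=0$ because $S$ is a (rational) quartic del Pezzo surface; hence $H^1(X,\calO_X)=0$. It remains to check that $X$ has at worst rational double points, so that for the minimal resolution $\rho\colon\tilde X\to X$ we still have $\omega_{\tilde X}\cong\rho^*\omega_X\cong\calO_{\tilde X}$ and $H^1(\tilde X,\calO_{\tilde X})=H^1(X,\calO_X)=0$, making $\tilde X$ a K3 surface. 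The candidate singular points are those lying over $x_1,\dots,x_{16}$ together with those lying over the singular locus of the branch data $V(F_3)$ and $\calT(Q)\cap S$; each is resolved by an explicit local computation on \eqref{kummer2} (equivalently on the inseparable model over $Q$ described in Step 3). Once this is done, $\tilde X$ contains the $16$ disjoint $(-2)$-curves over the $x_i$, hence is supersingular by Proposition~\ref{nick}.

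\emph{Step 2: the Kummer configuration.} The congruence $S$ is a quartic del Pezzo surface, so it carries exactly $16$ lines $L_1,\dots,L_{16}$, whose intersection graph is the $5$-regular Clebsch graph: realizing $S$ as the blow-up of $\bbP^2$ at five points, the five exceptional curves, the ten strict transforms of the lines through pairs of the points, and the strict transform of the conic through all five each meet exactly five of the others. As shown above, each $L_i$ splits under $q_S$ into the fibre $E(x_i)$ of $p_S$ over $x_i$ and the proper transform of the trope-conic $T(x_i)=\Pi(x_i)\cap Q$. The curves $E(x_i)$ are pairwise disjoint because they lie over distinct points of $\bbP^3$, and the proper transforms of the $T(x_i)$ are pairwise disjoint by the characteristic-two analogue of the classical fact that distinct trope-conics of a focal surface are mutually tangent; on $\tilde X$ all of these are smooth rational curves of self-intersection $-2$. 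Using $q_S^{*}L_i=E(x_i)+T(x_i)$, $E(x_i)^2=T(x_i)^2=-2$, and the fact that the deck involution of $q_S$ interchanges $E(x_i)$ with $T(x_i)$, one finds $E(x_i)\cdot T(x_j)=L_i\cdot L_j$ for $i\ne j$ and $E(x_i)\cdot T(x_i)=1$; consequently each curve of $\calA=\{E(x_i)\}$ meets exactly $5+1=6$ curves of $\calB=\{T(x_j)\}$, and symmetrically. Thus $(\calA,\calB)$ is a symmetric $(16_6)$ configuration, which a direct check identifies with the Kummer configuration, so $\tilde X$ is of Kummer type of index $6$.

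\emph{Step 3: birational identification with a Weddle surface.} By construction $X$ is the ramification divisor of the separable cover $p_S\colon Z_S\to\bbP^3$. Passing to its Stein factorization and to equation \eqref{kummer}, the cover $Z_S'\to\bbP^3$ is $x_4^2+F_2x_4+F_4=0$ in $\bbP(1,1,1,1,2)$, which in characteristic two ramifies precisely over the quadric $Q=V(F_2)$; over $Q$ the equation reduces to $x_4^2=F_4$, i.e.\ the inseparable $\bmu_2$-cover of the smooth quadric $Q$ branched along the curve $B=V(F_2,F_4)$. Hence the birational morphism $Z_S\to Z_S'$ identifies $X$ birationally with this inseparable double cover of $Q$, and by the analysis of Weddle surfaces in Section~2 — where exactly the inseparable $\bmu_2$-cover of a quadric $V(q_2)$ with branch $V(q_2,q_4)$ is shown to be birationally isomorphic to a Weddle quartic surface — we conclude that $\tilde X$ is birationally isomorphic to a Weddle quartic surface. (Alternatively one may route this through Section~3: the congruence determines a nonsingular point of $\Sigma_3$ whose polar quadric pulls back under $\phi$ to the corresponding Weddle surface.) This completes the proof.

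The main obstacle is the local analysis in Step 1: one has to show that the $16$ points over the $x_i$, together with whatever singularities of $X$ lie over the singular points of the branch data, are rational double points and nothing worse. For a general smooth congruence $S$ this should reduce to a bounded computation on \eqref{kummer2} (or on the inseparable model over $Q$), but carrying it out uniformly — and simultaneously verifying that the symmetric $(16_6)$ produced in Step 2 is genuinely the Kummer configuration rather than some other abstract $(16_6)$ — is where the real work lies.
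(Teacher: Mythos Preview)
Your Step~2 is a genuinely different and rather clean route to the index. The paper instead computes globally: the sum of the sixteen lines on $S$ lies in $|-4K_S|$, so the pull-back to $\tilde X$ has self-intersection $2\cdot(-4K_S)^2=128$; writing the pull-back as $\sum E(x_i)+\sum T(x_i)$ and expanding gives $-64+32n=128$, hence $n=6$. Your argument via the $5$-regular Clebsch graph of lines on a quartic del Pezzo, together with $q_S^*L_i=E(x_i)+T(x_i)$ and the deck involution, is more conceptual and pins down the incidence on the nose (each $E(x_i)$ meets $T(x_i)$ and the five $T(x_j)$ with $L_j\cdot L_i=1$). Both approaches, however, silently use that the extra singular points of $X$ do not lie over the sixteen lines; the paper asserts this explicitly, and you should too.

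The real gap is Step~1, which you acknowledge. The paper does not attempt a bare-hands local computation on \eqref{kummer2}. Instead it passes to the inseparable model over the quadric $Q$: the singular locus of $X$ is the zero scheme of a section $\nabla$ of $\Omega_Q^1\otimes\calL^{\otimes 2}$ with $\calL=\calO_Q(2)$, and $c_2(\Omega_Q^1\otimes\calL^{\otimes 2})=20$. An ordinary node contributes $1$ to this count, anything worse contributes more. Since $\mathrm{Aut}(S)$ contains $(\bbZ/2\bbZ)^{\oplus 4}$ acting transitively on the sixteen lines, the sixteen points over the $x_i$ are all of the same type; if any one were worse than a node the total would exceed $20$, so all sixteen are nodes, and the residual contribution is $4$, forcing the remaining singularities to be rational double points. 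This is the missing idea you need; your ``explicit local computation'' can be replaced entirely by this Chern-class plus symmetry argument.

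Step~3 as written does not close. Section~2 shows that a Weddle surface is birationally an inseparable $\bmu_2$-cover of a quadric with a particular branch structure; it does not show the converse, and an arbitrary $V(F_2,F_4)$ will not do. The paper's argument is concrete: once you know the sixteen points $x_i$ on $Q$ and the sixteen trope-conics form a $(16_6)$, project $Q$ from one of the $x_i$. The six trope-conics through that point go to six lines $V(l_i)$, the other ten go to conics through six of the fifteen intersection points, and $X$ becomes birationally the double plane $w^2+l_1\cdots l_6=0$ --- exactly the double-plane model of the Weddle surface obtained at the end of Section~2. Your alternative via Section~3 is plausible but would require identifying, from the congruence $S$, the specific smooth point of $\Sigma_3$ whose polar quadric pulls back to this surface; that is not immediate.
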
 
  
  \begin{proof} The known formula for the canonical class of a separable double cover $X\to S$ gives 
  $\omega_X\cong q^*(\omega_S(-K_S)) \cong \calO_X$. Let us look at the singularities of $X$. 
   
   The surface $X$ is an inseparable Kummer cover of the quadric $Q$ defined by a section of 
  $\calL^{\otimes 2}$, where $\calL \cong \calO_{Q}(2)$.  It is known 
  that its set of singular points is equal to the support of the scheme of zeros of a section $\nabla$ of 
  $\Omega_{Q}^1\otimes \calL^{\otimes 2}$. This follows from local computations: locally, the cover is given by 
  $z^2+f(x,y) = 0$, and local sections $\nabla = df(x,y)$ glue to a global section of $\Omega_{Q}^1\otimes \calL^{\otimes 2}$.
 An ordinary node is locally given by equation $z^2+xy = 0$, hence the local $\nabla$ is equal to n
  $ydx+xdy$ and it has a simple zero at the node. Any other singular point contributes more than one to $c_2$. 
  
  We have 
  $$c_2(\Omega_{Q}^1\otimes \calL^{\otimes 2}) = 
  c_2(\Omega_{Q}^1)+c_1(\Omega_{Q}^1)c_1(\calL^2)+c_1(\calL^2)^2 = 20.$$
 We skip the proof that $X$ is normal (the direct proof is rather elaborate but the fact follows 
 from the argument in the last paragraph of the proof). 
  It has sixteen singular points over the sixteen points $x_i$. So $X$ must have other singular points.
  It is known that the automorphism group $\mathrm{Aut}(S)$ contains a subgroup isomorphic to 
  $(\bbZ/2\bbZ)^{\oplus 4}$, 
  and that group 
  acts transitively on the set of 16 lines \cite[Theorem 3.1]{DD}. This implies that the sixteen points are ordinary nodes.  
 Since the total sum of the multiplicities of the remaining zeros of $\nabla$ is equal to $4$, $X$ has only  rational double points.
  This proves that a minimal resolution $\tilde{X}$ of $X$ is a K3 surface. 
    
   The sum of sixteen lines on $S$ is a divisor in the linear system 
  $|-4K_S|$.  The images on $S$ of extra singular points of $X$ lie outside the union of $16$ lines. 
  Thus the self-intersection of its pre-image on $\tilde{X}$ is equal to $128$. If $n$ is the index of the configuration 
  $\calA,\calB$ of $(-2)$-curves, then this self-intersection must be equal to $-64+32n$. This implies that $n = 6$.

 We have a Kummer configuration of 16 points and 16 conics on the quadric $Q$. Projecting $Q$ from one of the points, 
 we get six lines $V(l_i)$, the projections of the six conics containing the center of the projection map. 
 The projections of other ten conics are conics in the plane passing through six intersection points of the lines. 
 So, the surface $X$ is birationally isomorphic to the double plane $V(w^2+l_1\cdots l_6)$ as in the case of a 
 Weddle surface. The ten conics intersect at the projections of the rulings of $Q$ containing the center of the 
 projection map. We also get, as a bonus, that $X$ contains a rational double point of type $D_4$ (I believe that the image of the singular point under 
 the projection $q_S$ is the strange point on the del Pezzo surface $S$ defined in \cite{DD}).

\end{proof}

  
 \section{Rosenhain and G\"opel tetrads} A Rosenhain tetrad of a quartic Kummer surface is a subset of four nodes such 
 that the planes containing three of the nodes are tropes \cite[\S 50]{Hudson}. If one equips the set of $2$-torsion points 
 of $\Jac(C)$ with a structure of a symplectic four-dimensional 
 linear space over $\bbF_2$, then a Rosenhein tetrad is the image of a translate of a non-isotropic plane. There are $80$ Rosenhain tetrads. 
 Each Rosenhain tetrad defines a symmetric configuration $(4_3)$ between the sets of tropes and nodes. 
 The union of two Rosenhain tetrads 
 without common points form a symmetric configuration 
 $(8_4)$. This configuration is realized by $8$ vertices of a cube and $8$ faces of 
 two tetrahedra inscribed in the cube.
  
The union of two Rosemhain tetrads can be illustrated by the following figure (see \cite[7.3]{Manin}): 
 \begin{eqnarray}\label{figure}
 \begin{matrix}\circ&\star&&|&&\circ&\star\\
 \circ&\star&&|&&\circ&\star\\
\circ&\star&&|&&\circ&\star\\
 \circ&\star&&|&&\circ&\star\\
 \end{matrix}
 \end{eqnarray}
Here  circles correspond to the nodes and the stars correspond to tropes. Each side of the diagram 
represents a Rosenhain tetrad. A point in a row $i$ lies in the 
plane  
in the same row on the other side of the diagram, and it also lies in the three planes on the same side of the diagram 
from different rows.

Let us see how to get this configuration with the absence of the Kummer surface.
A quartic del Pezzo surface $S$ contains 20 pairs of tetrads of disjoint lines with the intersection relation
of each pair forming an abstract configuration 
$(4_3)$. We use a birational model of $S$ as the blow-up of 
five points $q_1,\ldots,q_5$ in 
the plane, and denote by 
$L_i$ the lines on $S$ coming from the exceptional curves over the points $p_i$,
10 lines $L_{ij}$ coming from the lines $\la q_i,q_j\ra$, and one line $L_0$ coming 
from the conic through the five points. Then the 20 pairs are the following:
\begin{itemize}  
\item $10$ pairs
$$\{L_0,L_{ij},L_{ik},L_{jk}\}, \quad \{L_i,L_j,L_k, L_{lm}\},$$
\item 10 pairs 
$$\{L_i, L_{ij},L_{ik},L_{il}\}, \quad \{L_m,L_{jm},L_{km},L_{lm}\}.$$
\end{itemize}
  
 Each tetrad of lines on $S$ from above splits in $X$ into $8$ disjoint $(-2)$-curves.
The curves correspond 
to the first two columns in the diagram, the other tetrad in the pair defines the third and the fourth columns.
We get only $40$ Rosenhain tetrads in this way. If $p\ne 2$, other $40$ Rosenhain tetrads arise from different 
congruences of lines, which define different irreducible components of the surface of bitangent lines of the Kummer surface.
I believe that the same is true if $p = 2$: other $40$ tetrads arise from different components of the surface of splitting lines 
of the double cover \eqref{kummer}. 
\vskip4pt
Note that a configuration of type $(4_3)$ is realized by two sets of lines among 20 lines on an octic model of the Kummer 
surface in characteristic two \cite[Figure 2]{KatsuraKondo}.

\vskip4pt
A \emph{G\"opel tetrad} is a subset of four nodes such that no three of them lie on a trope. 
There are 60 G\"opel tetrads. They correspond to the translated isotropic planes in $\bbF_2^4$. 
To get them from a quartic del Pezzo surface 
$S$, one considers 30 subsets of four skew lines $(L_i,L_j,L_{kl},L_{km})$, where $\{i,j\}\cap \{k,l,m\} = \emptyset$.
The pre-image of each subset in $X$ defines a set of four tropes and four 
points $x_i$ forming a G\"operl hexad. There will be $30$ G\"opel tetrads arising in this way. As in the case of Rosenhain 
tetrads, other $30$ G\"ope tetrads should arise from other irreducible components of the surface of splitting lines of 
\eqref{kummer}.

Recall from \cite[Theorem 1.20]{Gonzalez} that there are three abstract configurations of type $(16_6)$. The Kummer one is 
non-degenerate in the sense that any pair of trope-conics have two common vertices. It follows from 
our construction of Kummer configurations that they are non-degenerate. If $p\ne 2$, any non-degenerate Kummer configuration 
of points and planes of type $(16_6)$ is realized on a Kummer quartic surface. As we see, in 
characteristic $2$ this is not true anymore, and the Kummer surface should be replaced by a quadric surface.

\end{document}